\newcommand{\e}{\varepsilon}
\newcommand{\A}{\mathcal{A}}
\newcommand{\B}{\mathcal{B}}
\newcommand{\U}{\mathcal{U}}
\newcommand{\N}{\mathbb{N}}
\renewcommand{\le}{\leqslant}
\renewcommand{\leq}{\leqslant}
\renewcommand{\ge}{\geqslant}
\renewcommand{\geq}{\geqslant}
\renewcommand{\rho}{\varrho}
\DeclareMathOperator{\Card}{Card}
\DeclareMathOperator{\Fact}{Fact}
\newtheorem{theorem}{Theorem}
\newtheorem{corollary}[theorem]{Corollary}
\newtheorem{definition}[theorem]{Definition}
\renewcommand{\i}{\item[\rm{(i)}]}
\newcommand{\ii}{\item[\rm{(ii)}]}
\newcommand{\iii}{\item[\rm{(iii)}]}
\begin{document}

\begin{center}
\begin{Large}
\begin{bf}
On the Entropy and Letter Frequencies\\[0.75ex] of Powerfree Words\vspace{3ex}
\end{bf}
\end{Large}

{\large Uwe Grimm and Manuela Heuer\vspace{1.5ex}}

\begin{it}
Department of Mathematics and Statistics,
The Open University,\\ 
Walton Hall, Milton Keynes \mbox{MK7 6AA}, United Kingdom\vspace{1.5ex}
\end{it}
\end{center}

\begin{quote}
We review the recent progress in the investigation of
powerfree words, with particular emphasis on binary cubefree and
ternary squarefree words. Besides various bounds on the entropy, we
provide bounds on letter frequencies and consider their empirical
distribution obtained by an enumeration of binary cubefree words up to
length $80$.
\end{quote}\vspace{1ex}

\section{Introduction}

The interest in combinatorics on words goes back to the work of Axel
Thue at the beginning of the 20th century \cite{Thue:1977}. He showed, in
particular, that the famous morphism
\begin{equation}\label{eq:tm}
    \rho\!:\quad \begin{array}{ccc} 0 &\mapsto& 01 \\
                                 1 &\mapsto& 10 \end{array},
\end{equation}
called Thue-Morse morphism since the work of Morse \cite{Morse:1921},
is cubefree. Its iteration on the initial word $0$ produces an
infinite cubefree word
\[
    0110100110010110100101100110100110010110011010010110100110010110 \ldots
\]
over a binary alphabet, which means that it does not contain any
subword of the form $0^3 = 000$, $1^3 = 111$, $(01)^3 = 010101$,
$(10)^3=101010$ and so forth. Moreover, the statement that the
morphism is cubefree means that it maps any cubefree word to a
cubefree word, so it preserves this property.  Generally, the
iteration of a powerfree morphism is a convenient way to produce
infinite powerfree words.

The investigation of powerfree, or more generally of pattern-avoiding
words, is one particular aspect of combinatorics on words; we refer
the reader to the book series
\cite{Lothaire:1997,Lothaire:2002,Lothaire:2005} for a comprehensive
overview of the area, including algebraic formulations and
applications. The area has attracted considerable activity in the past
decades
\cite{Zech:1958,Pleasants:1970,Bean:1979,Crochemore:1982,Shelton:1983,
Brandenburg:1983,Brinkhuis:1983,Leconte:1985a,Leconte:1985b,
Seebold:1985,Kobayashi:1986,Keraenen:1987,
Baker:1989,Currie:1993,Kolpakov:1999,Grimm:2001,Currie:2002,Richomme:2002,
Kucherov:2003}, and continues to do so, see
\cite{Karhu:2004,Richard:2004,Ochem:2006,Richomme:2007,Kolpakov:2007,
Ochem:2007a,Chalopin:2007,Ochem:2007b,Khalyavin:2007} for some recent
work. Beyond the realm of combinatorics on words and coding theory,
substitution sequences, such as the Thue-Morse sequence, have been
investigated for instance in the context of symbolic dynamics
\cite{Queffelec:1987,Fogg:2002,Allouche:2003} and aperiodic order
\cite{Moody:1997}, to name but two. In the latter case, one is
interested in systems which display order without periodicity, and
substitution sequences often provide paradigmatic models, which are
used in many applications in physics and materials science. However,
sequences produced by a substitution such as in Eq.~\eqref{eq:tm} have
subexponential complexity and hence zero combinatorial entropy, cf.\
Definition~\ref{def:entropy} below. A natural generalisation to
interesting sets of positive entropy is provided by powerfree or
pattern-avoiding words.

In this article, we review the recent progress on powerfree words,
with emphasis on the two `classic' cases of binary cubefree and
ternary squarefree words. We include a summary of relevant results
which are scattered over 25 years of literature, and also discuss some
new results as well as conjectures on cubefree morphisms and letter
frequencies in binary cubefree words.

The first term of interest is the combinatorial entropy of the set of
powerfree words. Due to the fact that every subword of a powerfree
word is again powerfree, the entropy of powerfree words exists as a
limit. It is a measure for the exponential growth rate of the number
of powerfree words of length $n$. Unfortunately, neither an explicit
expression for the entropy of $k$-powerfree words nor an easy way to
compute it numerically is known. Nevertheless, there are several
strategies to derive upper and lower bounds for this limit. Upper
bounds can be obtained, for example, by enumeration of all powerfree
words up to a certain length, or by the derivation of generating
functions for the number of powerfree words, see Section~4. Until
recently all methods to achieve lower bounds relied on powerfree
morphisms. However, the lower bounds obtained in this way are not
particularly good, since they are considerably smaller than the upper
bounds as well as reliable numerical estimates of the actual value of
the entropy. A completely different approach introduced recently by
Kolpakov \cite{Kolpakov:2007}, which amounts to choosing a parameter
value to satisfy a number of inequalities derived from a
Perron-Frobenius-type argument, provides surprisingly good lower
bounds for the entropy of ternary squarefree and binary cubefree
words.

In the following section, we briefly introduce the notation and basic
terminology; see \cite{Lothaire:1997} for a more detailed
introduction. We continue with a summary of results on $k$-powerfree
morphisms, which can be used to derive lower bounds for the
corresponding entropy. We then proceed by introducing the entropy of
$k$-powerfree words and summarise the methods to derive upper and
lower bounds in general, and for binary cubefree and ternary
squarefree words in particular. We conclude with a discussion of the
frequencies of letters in binary cubefree and ternary squarefree
words.

\section{Powerfree words and morphisms}

Define an \emph{alphabet} $\A$ as a finite non-empty set of symbols
called letters. The cardinality of $\A$ is denoted by
$\Card(\A)$. Finite or infinite sequences of elements from $\A$ are
called \emph{words}. The \emph{empty word} is denoted by $\e$. The set
of all finite words, the operation of concatenation of words and the
empty word $\e$ form the free monoid $\A^{*}$. The free
semigroup generated by $\A$ is $\A^{+}:=\A \setminus \{ \e \}$.

The \emph{length} of a word $u\in\A^{*}$, denoted by $|u|$, is the
number of letters that $u$ consists of. The length of the empty word
is $|\e|:=0$.

For two words $u,v \in \A^{*}$, we say that $v$ is a \emph{subword} or
a \emph{factor} of $u$ if there are words $x, y\in \A^{*}$ such that
$u=xvy$. If $x=\e$, the factor $v$ is called a \emph{prefix} of $u$,
and if $y=\e$, $v$ is called a \emph{suffix} of $u$. Given a set of
words $X \subset \A^{*}$ (here and in what follows, the symbol
$\subset$ is meant to include the possibility that both sets are
equal), the set of all factors of words in $X$ is denoted by $\Fact
(X)$.

A map $\rho\! :\A^{*} \rightarrow \B^{*}$, where $\A$ and $\B$ are
alphabets, is called a \emph{morphism} if $\rho (uv)=\rho (u)\rho (v)$
holds for all $u,v \in \A^{*}$. Obviously, a morphism $\rho $ is
completely determined by $\rho (a)$ for all $a\in \A$, and satisfies
$\rho (\e)=\e$. A morphism $\rho\! :\A^{*} \rightarrow \B^{*}$ is
called \emph{$\ell$-uniform}, if $|\rho (a)|=\ell$ for all $a\in \A.$

For a word $u$, we define $u^{0}:=\e$, $u^{1}:=u$ and, for an integer
$k>1$, the power $u^{k}$ as the concatenation of $k$ occurrences of
the word $u$. If $u\neq \e$, $u^{k}$ is called a \emph{$k$-power}. A
word $v$ contains a $k$-power if at least one of its factors is a
$k$-power.  If a word does not contain any $k$-power as a factor, it
is called \emph{$k$-powerfree}. If a word does not contain the
$k$-power of any word up to a certain length $p$ as a factor, it is
called \emph{length-$p$ $k$-powerfree}, i.e., $w=xu^{k}y $ implies
that $u=\e$ whenever $x,u,y \in \A^{*}$ with $|u| \leq p$.  We denote
the set of $k$-powerfree words in an alphabet $\A$ by
$F^{(k)}(\A)\subset \A^{*}$ and the set of length-$p$ $k$-powerfree by
$F^{(k,p)}(\A)\subset \A^{*}$.  By definition, the empty word $\e$ is
$k$-powerfree for all $k$. A word $w\in\A^{*}$ is called
\emph{primitive}, if $w=v^n$, with $v\in\A^{*}$ and $n \in \N$,
implies that $n=1$, meaning that $w$ is not a proper power of another
word $v$.

A morphism $\rho\! : \A^{*} \rightarrow \B^{*}$ is called
\emph{$k$-powerfree}, if $\rho (u)$ is $k$-powerfree for every
$k$-powerfree word $u$. In other words, $\rho $ is powerfree if $\rho
\bigl( F^{(k)}(\A)\bigr)\subset F^{(k)}(\B)$. A \emph{test-set} for
$k$-powerfreeness of morphisms on an alphabet $\A$ is a set $T \subset
\A^{*}$ such that, for any morphism $\rho\! :\A^{*} \rightarrow \B^{*}$,
$\rho $ is $k$-powerfree if and only if $\rho (T)$ is $k$-powerfree. A
morphism is called \emph{powerfree} if it is a $k$-powerfree morphism
for every $k \ge 2$.

In particular, $2$-powerfree and $3$-powerfree words and morphisms are
called \emph{squarefree} and \emph{cubefree}, respectively. A morphism
from $\A^{*}$ to $\B^{*}$ with $\Card (A)=2$ is also called a
\emph{binary} morphism. The notion of powerfreeness can be extended to
non-integer powers; see, for instance, Ref.~\cite{Karhu:2004} for an
investigation of $k$-powerfree binary words for $k\ge 2$. However, in
this article we shall concentrate on the cases $k=2$ and $k=3$, and
hence restrict the discussion to integer powers.

\section{Characterisations of $\boldsymbol{k}$-powerfree morphisms}

In what follows, we summarise a number of relevant results on
$k$-powerfree morphisms. In particular, we are interested in the
question how to test a specified morphism for $k$-powerfreeness. We
start with results relating to the case $k=2$.

\subsection{Characterisations of squarefree morphisms}

A sufficient (but in general not necessary) condition for the
squarefreeness of a morphism is known since 1979.
\begin{theorem}[Bean et al.~\cite{Bean:1979}]\label{thm:bean}
A morphism $\rho\! :\A^{*} \rightarrow \B ^{*}$ is squarefree if
\begin{itemize}
\i $\rho (w)$ is squarefree for every squarefree word
$w \in \A^{*}$ of length\/ $|w|\le 3$;
\ii $a=b$ whenever $a,b \in \A$ and $\rho (a)$ is a factor of $\rho (b)$.
\end{itemize}
\end{theorem}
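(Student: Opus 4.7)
The plan is to argue by contradiction using a minimality argument. Assume $\rho$ satisfies (i) and (ii) but is not squarefree. Then there exists a squarefree word $w$ with $\rho(w)\notin F^{(2)}(\B)$; pick such a $w$ of minimum length $n$, write $w=a_1 a_2\cdots a_n$ with $a_i\in\A$, and fix a square factor $uu$ of $\rho(w)$ with $u\neq\e$. Hypothesis (i) immediately gives $n\geq 4$, since otherwise $w$ itself would be a squarefree word of length at most three with non-squarefree image.

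The first step is to locate the square. The proper factors $a_1\cdots a_{n-1}$ and $a_2\cdots a_n$ are squarefree and strictly shorter than $w$, so by the minimality of $w$ their $\rho$-images are squarefree. Hence the occurrence of $uu$ in $\rho(w)$ must begin strictly inside $\rho(a_1)$ and end strictly inside $\rho(a_n)$; in particular the square spans the whole middle block $\rho(a_2\cdots a_{n-1})$, and $2|u|>|\rho(a_2\cdots a_{n-1})|$.

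The heart of the argument is then a blockwise synchronisation exploiting condition (ii). Decompose $\rho(w)=\rho(a_1)\rho(a_2)\cdots\rho(a_n)$; the square $uu$ induces a shift-by-$|u|$ self-alignment of $\rho(w)$ across this decomposition. For each interior index $j$ with $\rho(a_j)$ lying entirely inside the first copy of $u$, this shift produces a second occurrence of $\rho(a_j)$ inside the second copy of $u$. Either that occurrence aligns with some block $\rho(a_k)$, in which case (ii) immediately forces $a_j=a_k$, or it straddles the boundary between two consecutive blocks $\rho(a_i)\rho(a_{i+1})$, expressing $\rho(a_j)$ as a factor of $\rho(a_i a_{i+1})$. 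Iterating the aligned case through the middle of $w$ identifies the two halves of $u$ blockwise and produces adjacent identical factors $a_{j}\cdots a_{j+\ell-1}=a_{j+\ell}\cdots a_{j+2\ell-1}$ of $w$, i.e.\ a square in $w$, contradicting $w\in F^{(2)}(\A)$.

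The main obstacle is the straddling case: one must show that condition (ii), which a priori only forbids $\rho(a)$ from being a factor of $\rho(b)$ for single letters $a,b$, in fact rules out the two-block straddling scenarios as well, or else pushes them down to a short squarefree sub-factor of $w$ whose image can then be exposed as containing a square, contradicting (i). A closely related subtlety is the handling of the two ends of the square, where the alignment only concerns a suffix of $\rho(a_1)$ and a prefix of $\rho(a_n)$; extending these fragments to full blocks requires comparing a prefix of $\rho(a_n)$ against some $\rho(a_k)$, and dually for the suffix, via a finite case analysis driven by (ii).
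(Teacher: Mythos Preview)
The paper does not supply its own proof of this theorem; it is quoted from Bean, Ehrenfeucht and McNulty and used as a black box, so there is no argument in the paper to compare your proposal against.

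On the merits of the proposal itself: the architecture is the right one and matches the original source --- take a minimal squarefree counterexample $w=a_1\cdots a_n$, use minimality to pin the square $uu$ so that it begins strictly inside $\rho(a_1)$ and ends strictly inside $\rho(a_n)$, and then exploit the shift-by-$|u|$ self-alignment of $\rho(w)$ against its block decomposition. Your treatment of the aligned case is fine: when a shifted $\rho(a_j)$ coincides with some $\rho(a_k)$, condition~(ii) gives $a_j=a_k$, and propagating this yields a square in $w$.

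The gap is that you stop exactly where the proof becomes nontrivial. You explicitly flag the straddling case --- a shifted $\rho(a_j)$ overlapping a boundary $\rho(a_i)\rho(a_{i+1})$ --- as ``the main obstacle'' and then write that ``one must show'' condition~(ii) either excludes it or reduces it to a length-$\le 3$ image, without carrying out that reduction. The same applies to your final paragraph on the endpoints, which promises ``a finite case analysis driven by (ii)'' but does not perform it. This is not bookkeeping that can be waved away: condition~(ii) as stated says nothing directly about occurrences of $\rho(a_j)$ inside a two-block concatenation, so the passage from (ii) to ``straddling is impossible or yields a short square'' is precisely the content of the theorem. In the original proof this is done by locating the midpoint of $uu$ in the block decomposition and using~(ii) to force letter equalities inward from both ends until either a square in $w$ appears or one isolates a squarefree factor of length at most three whose image already contains a square, contradicting~(i). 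As written, your proposal is a correct outline that identifies the obstacles but does not overcome them.
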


If the morphism $\rho$ is uniform, this condition is in
fact also necessary, because in this case $\rho(a)$ being a factor of
$\rho(b)$ implies that $\rho (a)=\rho (b)$. If $a,b\in \A$ exist with
$a\neq b$ and $\rho (a)=\rho (b)$, then clearly $\rho $ is not
squarefree since $\rho (ab)=\rho (a)\rho (b)$ is a square. This gives
the following corollary.

\begin{corollary}
A uniform morphism $\rho\! :\A^{*} \rightarrow \B ^{*}$ is squarefree if and
only if $\rho (w)$ is squarefree for every squarefree word $w \in \A^{*}$
of length\/ $|w|\le 3$.
\end{corollary}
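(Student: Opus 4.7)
The plan is to derive the corollary as a direct consequence of Theorem~\ref{thm:bean}, using uniformity to automatically force condition~(ii) from condition~(i).

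The forward implication is immediate from the definition of a squarefree morphism: if $\rho$ is squarefree, then by definition $\rho(w)$ is squarefree for every squarefree $w$, in particular for those of length at most $3$. For the converse, I would assume the hypothesis and verify that both conditions of Theorem~\ref{thm:bean} hold. Condition~(i) is literally the hypothesis. For condition~(ii), suppose $a,b\in\A$ are such that $\rho(a)$ is a factor of $\rho(b)$. Because $\rho$ is $\ell$-uniform, $|\rho(a)|=|\rho(b)|=\ell$, so being a factor of equal length forces $\rho(a)=\rho(b)$. It remains to conclude $a=b$. Assume, for contradiction, that $a\neq b$. Then the two-letter word $ab$ has distinct adjacent letters and hence is squarefree with $|ab|=2\le 3$, but $\rho(ab)=\rho(a)\rho(b)=\rho(a)\rho(a)$ is a square, contradicting the hypothesis that $\rho(w)$ is squarefree for every squarefree $w$ with $|w|\le 3$. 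Hence $a=b$, condition~(ii) holds, and Theorem~\ref{thm:bean} applies to yield squarefreeness of $\rho$.

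There is no real obstacle here; the only point that requires care is the observation that, under uniformity, the factor relation $\rho(a)\sqsubseteq\rho(b)$ collapses to the equality $\rho(a)=\rho(b)$, which is exactly what converts Bean et al.'s two-part sufficient condition into a single necessary and sufficient one. This is also precisely the content of the informal argument given in the paragraph preceding the corollary, so the formal write-up is just a clean packaging of that remark.
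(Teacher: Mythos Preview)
Your proof is correct and follows exactly the same argument as the paper: uniformity collapses the factor relation to equality, and then the squarefreeness of $\rho(ab)$ for $a\neq b$ forces injectivity on letters, so condition~(ii) of Theorem~\ref{thm:bean} is automatic. As you note yourself, this is precisely the informal reasoning given in the paragraph preceding the corollary.
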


This corollary corresponds to Brandenburg's Theorem 2 in
Ref.~\cite{Brandenburg:1983} which only demands that $\rho (w)$ is
squarefree for every squarefree word $w \in \A^{*}$ of length exactly
$3$. A short calculation reveals that this condition is equivalent to
(i), because every squarefree word of length smaller than $3$ occurs
as a factor of a squarefree word of length $3$.

For the next characterisation, we need the notion of a pre-square with
respect to a morphism $\rho $. Let $\A$ be an alphabet, $w \in \A^{*}$ a
squarefree word  and $\rho\! :\A^{*} \rightarrow \B ^{*}$ a morphism. A
factor $u \neq \e$ of $\rho (w)=\alpha u \beta$ is called a
\emph{pre-square} with respect to $\rho $, if there exists a word $w' \in
\A^{*}$ satisfying: $ww'$ is squarefree and $u$ is a prefix of $\beta
\rho (w')$ or $w'w$ is squarefree and $u$ is a suffix of
$\rho (w')\alpha$. Obviously, if $u$ is a pre-square, then either $\rho (ww')$
or $\rho (w'w)$ contains $u^2$ as a factor.

\begin{theorem}[Crochemore \cite{Crochemore:1982}]
 A morphism $\rho\! :\A^{*} \rightarrow \B ^{*}$ is squarefree if and only if
\begin{itemize}
 \i $\rho (w)$ is squarefree for every squarefree word $w \in \A^{*}$
 of length\/ $|w|\le 3$;
 \ii for any $a \in \A$, $\rho (a)$ does not have any internal pre-squares.
\end{itemize}
\end{theorem}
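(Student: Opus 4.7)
I will prove both implications of the theorem.

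$(\Rightarrow)$ Assume $\rho$ is squarefree. Condition (i) is immediate. For (ii), suppose some $\rho(a) = \alpha u \beta$ has an internal pre-square $u$ (so $\alpha, \beta \neq \e$). By the very definition of pre-square there is a $w'$ with, say, $aw'$ squarefree and $u$ a prefix of $\beta\rho(w')$; then
\[
\rho(aw') \;=\; \alpha u \beta \rho(w') \;=\; \alpha\, u\, u\, \gamma
\]
for a suitable $\gamma$, so $\rho(aw')$ contains the square $u^2$, contradicting squarefreeness of $\rho$ on the squarefree word $aw'$. The other case (with $w'a$) is symmetric.

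$(\Leftarrow)$ Assume (i) and (ii), and suppose for a contradiction that $\rho$ is \emph{not} squarefree. Pick a squarefree $w = a_1\cdots a_n$ of minimal length for which $\rho(w)$ contains a square, and write $\rho(w) = \alpha\, u\, u\, \beta$. By (i) we have $n \geq 4$. Minimality forces $|\alpha| < |\rho(a_1)|$ and $|\beta| < |\rho(a_n)|$, since otherwise $u^2$ would already be a factor of $\rho(a_2\cdots a_n)$ or of $\rho(a_1\cdots a_{n-1})$, whose preimages are shorter squarefree words. Let $p = |\alpha u|$ mark the boundary between the two copies of $u$, and choose $i$ minimal with $|\rho(a_1\cdots a_i)| \geq p$, decomposing $\rho(a_i) = \gamma\delta$ so that $|\rho(a_1\cdots a_{i-1})\gamma| = p$. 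From the overlap of the two $u$'s at position $p$ I plan to extract a non-empty, strictly internal factor $v$ of some $\rho(a_j)$ which repeats to the right as a prefix of $\delta\rho(a_{i+1}\cdots a_n)$ (or symmetrically to the left). This repetition exhibits $v$ as a pre-square of $\rho(a_j)$ witnessed by the corresponding suffix or prefix of $w$, which is squarefree as a factor of $w$; this contradicts (ii).

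The real work and the main obstacle lie in the case analysis for the reverse direction. When $|u|$ is comparable to or larger than $\max_{a \in \A} |\rho(a)|$, the extraction is essentially immediate: the prefix $\delta$ of the second copy of $u$ matches a factor ending the first copy, and this pre-square lives strictly inside $\rho(a_i)$ thanks to $|\alpha|<|\rho(a_1)|$. When $|u|$ is small, however, the whole square $u^2$ may fit inside $\rho(a_k a_{k+1} a_{k+2})$ for some $k$, and one must invoke (i) applied to the three-letter squarefree factor $a_k a_{k+1} a_{k+2}$ of $w$ to derive the contradiction. The delicate borderline cases — where the extracted factor threatens to be a prefix or suffix of $\rho(a_j)$ rather than strictly internal — must be ruled out by exploiting the strict inequalities $|\alpha|<|\rho(a_1)|$, $|\beta|<|\rho(a_n)|$ together with the minimality of $i$. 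This bookkeeping, rather than any single clever step, is the heart of Crochemore's argument.
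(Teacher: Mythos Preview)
The paper does not supply a proof of this theorem at all: it is stated as a result of Crochemore with a bare citation to \cite{Crochemore:1982}, and the text moves on immediately to the corollary for ternary alphabets. There is therefore no ``paper's own proof'' to compare your attempt against.

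On the merits of your sketch: the forward direction is fine and complete. Your outline of the reverse direction follows the standard strategy (take a minimal counterexample, locate the cut between the two copies of $u$, and extract an internal pre-square in some $\rho(a_j)$), and you correctly identify where the difficulty lies. However, what you have written is explicitly a plan rather than a proof: the crucial step --- showing that the extracted repeated factor is genuinely \emph{internal} to some $\rho(a_j)$, and not merely a prefix or suffix --- is asserted but not carried out. In Crochemore's original argument this requires a careful alignment analysis using condition~(i) on words of length~$3$ to control how the images $\rho(a_j)$ can overlap inside $u$, and that analysis is where the content of the theorem lives. Your final paragraph acknowledges this honestly, but as it stands the reverse implication is not established.
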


It follows that, for a ternary alphabet $\A$, a finite test-set
exists, as specified in the following corollary. However, the
subsequent theorem shows that, as soon as we consider an alphabet with
$\Card(\A) > 3$, no such finite test-sets exist, so the situation
becomes more complex when considering larger alphabets.

\begin{corollary}[Crochemore \cite{Crochemore:1982}]
Let\/ $\Card(\A)=3$. A morphism $\rho\! :\A^{*} \rightarrow \B^{*}$ is squarefree
if and only if  $\rho (w)$ is squarefree for every squarefree word $w \in \A^{*}$ of
length\/ $|w|\le 5$.
\end{corollary}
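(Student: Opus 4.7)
The necessity is immediate: if $\rho$ is squarefree, then by definition $\rho(w)$ is squarefree for every squarefree $w$, in particular those of length at most $5$. The work lies in the sufficiency. My plan is to deduce it from Crochemore's theorem by verifying both of its hypotheses. Condition~(i) of that theorem is free, since every squarefree $w$ of length at most $3$ satisfies $|w|\le 5$. So the whole task reduces to showing that no $\rho(a)$ has an internal pre-square.

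I would argue the contrapositive. Suppose some $a\in\A$ is such that $\rho(a)=\alpha u\beta$ has an internal pre-square $u$. By the definition of pre-square, and using symmetry between the prefix and suffix cases, there is a squarefree $w'\in\A^{*}$ with $aw'$ squarefree and $u$ a prefix of $\beta\rho(w')$; hence $\rho(aw')$ contains $u^{2}$ as a factor and is therefore not squarefree. The game is now to replace $w'$ by a short witness $w''$ so that $aw''$ is squarefree with $|aw''|\le 5$ and $\rho(aw'')$ still carries the square.

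The truncation step is the technical heart of the proof. Observe that the second copy of $u$ in $\rho(aw')$ begins inside $\rho(a)$ at the boundary between $\alpha$ and $u\beta$, so it overflows into $\rho(w')$ by at most $|u|-|\beta|$ letters. Only the prefix of $w'$ whose $\rho$-image covers these extra positions is relevant; everything beyond it can be discarded provided the shortened word is still squarefree. Here one uses that the alphabet is ternary: squarefree extensions of $a$ are extremely constrained, since at each step there are at most two admissible next letters, and short squarefree continuations of any prefix always exist. A careful case analysis, enumerating the (finitely many) squarefree words of length at most $5$ that begin with each letter of $\A$, shows that one can always select a squarefree $w''$ of length at most $4$ with the same initial letters as $w'$ so that $\rho(aw'')$ already contains $u^{2}$.

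The forward bookkeeping is routine, but the main obstacle is precisely this truncation: one must check that shortening $w'$ to a length-$\le 4$ continuation neither destroys squarefreeness of $aw''$ nor cuts off the second copy of $u$, and that no internal pre-square can require a witness of length $\ge 5$ on a ternary alphabet. Once the truncation is established, the contrapositive is complete: if $\rho(a)$ has an internal pre-square, some squarefree word of length at most $5$ is mapped to a non-squarefree word, contradicting the hypothesis. Combined with condition~(i), Crochemore's theorem then yields that $\rho$ is squarefree.
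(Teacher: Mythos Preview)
The paper does not prove this corollary; it is simply stated with attribution to Crochemore's 1982 paper, so there is no in-paper argument to compare against. Your overall strategy---deduce the result from Crochemore's pre-square characterisation, with condition~(i) coming for free---is the intended one, and the contrapositive set-up for condition~(ii) is sound.

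The gap is in the truncation step. You correctly observe that only the prefix of $w'$ whose image covers the overflow of length $|u|-|\beta|$ matters, but then you invoke ``a careful case analysis, enumerating the (finitely many) squarefree words of length at most $5$''. This presupposes the bound you are trying to prove: until you know the witness is short there is nothing finite to enumerate, and since $\rho$ is arbitrary, no case list over words alone can settle it. What is actually needed is a short pigeonhole argument that you have not supplied. Take $w'=c_{1}\cdots c_{k}$ minimal. By minimality the second copy of $u$ reads $\beta\,\rho(c_{1})\cdots\rho(c_{k-1})\,\delta$ with $\delta$ a nonempty prefix of $\rho(c_{k})$, so $\rho(c_{1}\cdots c_{k-1})$ is a factor of $u$, which is itself a \emph{proper} factor of $\rho(a)$; hence $|\rho(c_{j})|<|\rho(a)|$ for every $j\le k-1$. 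If $k\ge 5$, the squarefree ternary word $c_{1}c_{2}c_{3}c_{4}$ has length $4$ and therefore contains all three letters of $\A$, so $c_{j}=a$ for some $j\le 4\le k-1$, giving the absurdity $|\rho(a)|<|\rho(a)|$. Thus $k\le 4$ and $|aw'|\le 5$. The ternary hypothesis enters through the single combinatorial fact that every squarefree word of length $4$ over three letters uses all of them---not through an enumeration of short words.
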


\begin{theorem}[Crochemore \cite{Crochemore:1982}]
 Let\/ $\Card(\A) > 3$. For any integer $n$, there exists a morphism
 $\rho\! :\A^{*} \rightarrow \B^{*}$ which is not squarefree, but maps all
 squarefree words of length up to $n$ on squarefree words.
\end{theorem}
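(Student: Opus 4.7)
The plan is to explicitly construct, for each integer $n$, a morphism $\rho_n\!:\A^{*}\to\B^{*}$ whose failure of squarefreeness is postponed to a specific squarefree word of length greater than $n$. Since $\Card(\A)>3$, we have strictly more letters than needed to support squarefree words, and the extra flexibility is what allows us to hide the fault. Intuitively, we want the images $\rho_n(a)$ to be long enough, and their prefixes and suffixes to interlock in such a controlled way, that the square emerging on the image of some long squarefree word $w$ spans many letter boundaries and therefore cannot be realised by any shorter squarefree word.

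Concretely, I would fix a squarefree word $w = w_1 w_2 \cdots w_N$ with $N > n$ over $\A$ (which exists since $\Card(\A) \ge 4$), and then design the images $\rho_n(a)$ for $a \in \A$ so that they share a common long block $v$ placed at staggered positions. The goal is to arrange matters so that the concatenation $\rho_n(w)$ has the shape $\alpha v v \beta$, where the second occurrence of $v$ is built up out of contributions from each of $w_1,\ldots,w_N$ in turn, so that omitting any of them, or breaking the specific sequence, destroys the square. A convenient implementation is to take $|\rho_n(a)|$ of order $N$ and to make the overlap between $\rho_n(w_i)$ and $\rho_n(w_{i+1})$ contribute just one of the $N$ letters of the second copy of $v$; then the full second $v$ is only completed when all $N$ consecutive letter-images are present.

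The verification has two parts. One part is easy: by design $\rho_n(w)$ contains $vv$, so $\rho_n$ is not squarefree. The harder part is to show that for every squarefree $w'\in\A^{*}$ with $|w'|\le n$, the image $\rho_n(w')$ is squarefree. For this I would show that any square $uu$ occurring in some $\rho_n(w')$ would have to contain a full translated copy of $v$, forcing $|u|$ to be large and hence $w'$ to have at least $N$ letters, contradicting $|w'|\le n$. Such an argument proceeds by case distinction on how $uu$ can sit in $\rho_n(w')$ (aligned with letter boundaries, or straddling $1,2,\ldots$ of them) combined with the fact that the images $\rho_n(a)$ for distinct $a$ differ outside the shared block $v$, preventing accidental short squares.

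The main obstacle, and the point requiring real work, is this last combinatorial verification that no short squarefree word maps to a word containing a square. The delicate aspect is ruling out squares that straddle several letter boundaries without involving all of $w_1,\ldots,w_N$; getting the block $v$, its placement in each $\rho_n(a)$, and the distinguishing outer parts of the images exactly right so that only the full sequence $w$ completes a square is the crux of the construction, and this is where one pays the price for making $n$ arbitrarily large.
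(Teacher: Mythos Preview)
The paper does not supply its own proof of this theorem; it is quoted from Crochemore's 1982 article without argument. So there is no in-paper proof to compare against, only Crochemore's original construction.

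Your proposal is a plan rather than a proof: you never actually exhibit the morphism $\rho_n$, and you explicitly flag the verification step (that no squarefree $w'$ with $|w'|\le n$ maps to a word containing a square) as the ``main obstacle'' without carrying it out. As written, this is an outline of a strategy, and the strategy itself is more intricate than necessary.

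The simplification you are not using is the special role of the fourth letter. With $\Card(\A)\ge 4$, three of the letters already support arbitrarily long squarefree words, so one letter --- call it $d$ --- is combinatorially ``free''. Crochemore's idea exploits this asymmetry: the morphism acts essentially as the identity (or via a fixed squarefree morphism) on $\{a,b,c\}$, and only $\rho(d)$ is engineered, as a long squarefree word $u$ over $\{a,b,c\}$ of length exceeding $n$. The failure of squarefreeness is then witnessed by a word of the shape $d\cdot u$ (or a close variant), whose image contains $u\,u$; this witness has length greater than $n$. The verification for short inputs reduces to controlling how the single block $u$ interacts with short $\{a,b,c\}$-fragments sitting on either side of $d$ --- a localised one-letter problem rather than a global interlocking of all images.

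Your ``staggered block'' idea, by contrast, asks every letter's image to participate in assembling the square, with each boundary contributing one letter of the second copy of $v$. This is both vaguer and harder to control: you have not said where the \emph{first} copy of $v$ lives, and if $v$ (or a long common piece) sits inside every $\rho(a)$ then already two-letter inputs produce two nearby occurrences of that piece, which makes ruling out short squares delicate in exactly the way you anticipate. The asymmetric single-trap-letter construction sidesteps all of this, and that is the genuine idea your sketch is missing.
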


\subsection{Characterisations of cubefree and $k$-powerfree morphisms}

We now move on to characterisations of cubefree and $k$-powerfree
morphisms for $k>3$. We start with a recent result on cubefree binary
morphisms.

\begin{theorem} [Richomme, Wlazinski \cite{Richomme:2002}]
A set $T\subset \{a,b\}^{*}$ is a test-set for cubefree morphisms
from $\A^{*}=\{a,b\}^{*}$ to $\B^{*}$ with\/ $\Card(\B) \ge 2$ if and
only if\/ $T$ is cubefree and\/ $\Fact(T)\supset T_{\mathrm{min}}$, where
\[
T_{\mathrm{min}}:=\{abbabba, baabaab, ababba, babaab, abbaba, baabab, aabba,
bbaab, abbaa, baabb, ababa, babab\}.
\]
\end{theorem}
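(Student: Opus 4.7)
The plan is to prove the biconditional in two directions, with the bulk of the work being the sufficiency direction, which reduces to showing that $T_{\mathrm{min}}$ itself is a test-set for cubefree binary morphisms.

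For necessity, suppose $T$ is a test-set. First, applying the criterion to the identity morphism on $\{a,b\}^{*}$ (which is trivially cubefree) forces $T=\mathrm{id}(T)$ to be cubefree. To establish $\Fact(T)\supset T_{\mathrm{min}}$, I would exhibit, for each of the twelve words $u\in T_{\mathrm{min}}$, a \emph{witness morphism} $\rho_{u}\!:\{a,b\}^{*}\to\B^{*}$ that is \emph{not} cubefree but whose only ``obstruction'' is $u$, in the sense that $\rho_{u}(v)$ is cubefree for every cubefree $v$ with $u\notin\Fact(\{v\})$. Given such witnesses, if $u\notin\Fact(T)$ for some $u\in T_{\mathrm{min}}$, then $\rho_{u}(T)$ would be cubefree while $\rho_{u}$ is not, contradicting the test-set hypothesis. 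The twelve witnesses come in a few orbits under the natural symmetries (letter swap $a\leftrightarrow b$ and word reversal), so essentially only three or four morphisms need to be written down; each is verified by direct inspection together with Theorem~1 applied to the complement.

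For sufficiency, assume $T$ is cubefree and $\Fact(T)\supset T_{\mathrm{min}}$. If $\rho$ is cubefree, then $\rho(T)\subset\rho(F^{(3)}(\A))\subset F^{(3)}(\B)$ automatically. Conversely, if $\rho(T)$ is cubefree then, since cubefreeness is inherited by factors, $\rho(w)$ is cubefree for every $w\in\Fact(T)$, in particular for every $w\in T_{\mathrm{min}}$. So the problem reduces to showing that $T_{\mathrm{min}}$ is a test-set: namely, if $\rho(w)$ is cubefree for every $w\in T_{\mathrm{min}}$, then $\rho$ is cubefree.

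The main obstacle is this reduced statement, and I would prove it by contradiction on a minimal counterexample. Suppose $\rho$ is not cubefree; choose a cubefree word $w\in\{a,b\}^{+}$ of minimum length such that $\rho(w)$ contains a cube $x^{3}$, and fix an occurrence $\rho(w)=\alpha\, x^{3}\,\beta$. Minimality of $|w|$ forces the cube occurrence to ``touch'' both ends of $\rho(w)$: more precisely, one can show that $\alpha$ is a proper prefix of $\rho(a_{1})$ and $\beta$ a proper suffix of $\rho(a_{|w|})$ where $w=a_{1}\cdots a_{|w|}$, for otherwise one could strip a letter and contradict minimality. The core combinatorial step is then to enumerate the possible alignments of the three factors $x$ with the block decomposition $\rho(a_{1})\rho(a_{2})\cdots\rho(a_{|w|})$. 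Tracking where each of the three copies of $x$ begins relative to the block boundaries, and using that $w$ is cubefree over a binary alphabet (so consecutive letters are heavily constrained), yields a bounded list of possibilities for $w$. Exploiting the swap and reversal symmetries, this list collapses onto $T_{\mathrm{min}}$ itself, giving the contradiction that $\rho(w)$ is cubefree for some $w\in T_{\mathrm{min}}$ by hypothesis. The delicate point — and the one I would expect to occupy most of the proof — is making the alignment case analysis exhaustive: every minimal $w$ must provably equal, up to symmetry, one of the twelve listed words, which explains why $T_{\mathrm{min}}$ has precisely this shape.
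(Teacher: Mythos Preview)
The paper does not prove this theorem; it is a survey, and the result is quoted from Richomme and Wlazinski \cite{Richomme:2002} without proof. So there is no in-paper argument against which to compare your attempt.

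That said, your outline matches the strategy of the original source: necessity via witness morphisms exploiting the letter-swap and reversal symmetries, and sufficiency via a minimal-counterexample alignment analysis. One point needs tightening. You assert that the minimal cubefree $w$ with $\rho(w)$ containing a cube must itself lie in $T_{\mathrm{min}}$. That is not literally true. Take $\rho(a)=0011$, $\rho(b)=1100$: then $\rho(ab)=00111100$ contains $1^{3}$ while $\rho(a)$ and $\rho(b)$ are cubefree, so the minimal $w$ here is $ab\notin T_{\mathrm{min}}$. What the argument must actually deliver is that for each possible shape of the minimal $w$ there is some $u\in T_{\mathrm{min}}$ containing $w$ as a factor (in the example, $u=ababa$ works), so that $\rho(u)$ inherits the cube and the contradiction goes through. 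This is precisely why the characterization reads $\Fact(T)\supset T_{\mathrm{min}}$ rather than $T\supset T_{\mathrm{min}}$: it is the presence of each $T_{\mathrm{min}}$ word as a \emph{factor} that matters. With that correction the skeleton of your proof is sound; the exhaustive alignment case analysis, which you rightly flag as the delicate part, is carried out in full in the cited reference.
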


Obviously, the set $T_{\mathrm{min}}$ itself is a test-set for
cubefree binary morphisms. Another test-set is the set of cubefree
words of length $7$, as each word of $T_{\mathrm{min}}$ appears as a
factor of this set. There are even single words which contain all the
elements of $T_{\mathrm{min}}$ as factors. For instance, the cubefree
word $aabbababbabbaabaababaabb$ is one of the $56$ words of length
$24$ which are test-sets for cubefree morphisms on $\{a,b\}$. The
length of this word is optimal: no cube-free word of length $23$
contains all the words of $T_{\mathrm{min}}$ as factors.

The following sufficient characterisation of $k$-powerfree morphisms
generalises Theorem~\ref{thm:bean} to integer powers $k>2$.

\begin{theorem}[Bean et al.~\cite{Bean:1979}]\label{thm:bean2}
Let $\rho\!: \A^{*} \rightarrow \B^{*}$ be a morphism for alphabets $\A$
and $\B$ and let $k>2$. Then $\rho $ is $k$-powerfree if
\begin{itemize}
\i $\rho (w)$ is $k$-powerfree whenever $w \in \A^{*}$ is $k$-powerfree
and of length\/ $|w|\le k+1$;
\ii $a=b$ whenever $a,b \in \A$ with $\rho(a)$ a factor of $\rho (b)$;
\iii the equality $x \rho (a) y=\rho (b)\rho (c)$, with $a,b,c \in \A$ and
$x, y \in \B^{*}$, implies that either $x=\e$, $a=b$ or $y=\e$, $a=c$.
\end{itemize}
\end{theorem}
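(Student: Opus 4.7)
The plan is to argue by contradiction. Assume $\rho$ satisfies (i)--(iii) but is not $k$-powerfree, and choose $w = a_1 a_2 \cdots a_n \in \A^{*}$ of minimal length $n$ such that $w$ is $k$-powerfree while $\rho(w)$ contains a factor $u^k$ with $u \neq \e$. Hypothesis (i) forces $n \ge k+2$, and the minimality of $n$ forces the block $u^k$ to start strictly inside $\rho(a_1)$ and to end strictly inside $\rho(a_n)$: if either end sat at an image boundary we could drop $a_1$ or $a_n$ and obtain a strictly shorter counter-example, violating minimality.

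The core of the argument is a synchronisation step. Label the $k$ occurrences of $u$ inside $u^k \subset \rho(w)$ by their starting positions $p_1 < p_2 < \cdots < p_k$, with $p_{j+1}-p_j = |u|$, and for each $j$ let $a_{i_j}$ denote the letter whose image block contains $p_j$. I would show by induction on $j$ that $p_j$ must coincide with the left boundary of $\rho(a_{i_j})$, hence with a letter boundary of $w$. For the base case one looks at a short window $\rho(a_1)\rho(a_2)$ (or $\rho(a_1)\rho(a_2)\rho(a_3)$, if $|u|$ is large) that contains the first copy of $u$; one then extracts a factor equation of the form $x\,\rho(a)\,y = \rho(b)\rho(c)$ and invokes condition (iii), which forces $x=\e$ or $y=\e$, placing $p_1$ at a letter boundary. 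The inductive step is identical: knowing that the previous copy of $u$ terminated at a letter boundary, (iii) applied to a two- or three-letter window again aligns the next copy. Condition (ii) is used in parallel to rule out the degeneracy of distinct letters sharing an image or one image being a factor of another, which would make the alignment ambiguous.

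Once every $p_j$ is aligned with a letter boundary, there is a factorisation $w = x\, v_1 v_2 \cdots v_k\, y$ with $x,y \in \A^{*}$ and $\rho(v_j) = u$ for each $j$. Since $|\rho(v_j)| = |u|$ is the same for all $j$, applying (ii) letter-by-letter yields $v_1 = v_2 = \cdots = v_k$, so $w$ contains the $k$-power $v_1^k$ with $v_1 \neq \e$, contradicting the assumed $k$-powerfreeness of $w$.

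The main obstacle will be the synchronisation step, and specifically the case analysis reflecting how $|u|$ compares with the lengths $|\rho(a)|$: when $|u|$ is small, several copies of $u$ may sit inside one $\rho(a_i)$, so condition (iii) has to be applied to a window of only two consecutive images at each step; when $|u|$ is large, a single copy of $u$ straddles many letters and (iii) must be iterated across the internal boundaries. It is precisely this broader use of (iii), which is absent from the squarefree Theorem~\ref{thm:bean}, that the present theorem requires in order to cover the powers $k>2$.
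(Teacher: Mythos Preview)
The paper does not supply its own proof of this theorem: it is quoted verbatim from Bean, Ehrenfeucht and McNulty~\cite{Bean:1979} and stated without argument, as is the case for all the cited characterisation results in Section~3. So there is nothing in the paper to compare your proposal against.

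For what it is worth, your overall strategy --- take a minimal $k$-powerfree $w$ with $\rho(w)$ containing some $u^k$, then use (ii) and (iii) to synchronise the copies of $u$ with image-block boundaries and pull back a $k$-power into $w$ --- is indeed the approach of the original Bean--Ehrenfeucht--McNulty paper. Two remarks on your sketch, though. First, minimality of $n$ only guarantees that $u^k$ meets both $\rho(a_1)$ and $\rho(a_n)$; it does not by itself force $u^k$ to start \emph{strictly} inside $\rho(a_1)$, and in fact your subsequent claim that $p_1$ lies at the left boundary of some $\rho(a_{i_1})$ is in tension with this. Second, the inductive synchronisation step is where essentially all of the work lives, and your description (``extract a factor equation of the form $x\,\rho(a)\,y=\rho(b)\rho(c)$ and invoke (iii)'') glosses over the genuine case analysis: one must show that at each cut there really is a letter $a$ whose full image $\rho(a)$ sits inside a two-letter window $\rho(b)\rho(c)$, which requires comparing $|u|$ against the shortest and longest image lengths and using (ii) to exclude one image swallowing another. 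None of this is wrong in outline, but as written it is a plan rather than a proof.
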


As in the squarefree case above, a uniform morphism $\rho $ for which (i)
holds also meets (ii), because uniformity implies that $\rho (a)=\rho (b)$. If
$a\neq b$, the word $a^{k-1}b$ is $k$-powerfree but
$\rho (a^{k-1}b)=\rho (a)^k$ is a $k$-power, which produces a
contradiction. The condition (iii) means that, for all letters
$a\in\A$, the images $\rho(a)$ do not occur as an inner factor of
$\rho(bc)$ for any $b,c \in \A$. In general, this is not necessary for
uniform morphisms; an example is given by the Thue-Morse morphism
$\rho$ of Eq.~\eqref{eq:tm}. For instance, $\rho (00)=0101=0\rho (1)1$, which violates
condition (iii) in Theorem~\ref{thm:bean2}. Nevertheless, the Thue-Morse
morphism is cubefree \cite{Thue:1977}.

Alphabets with $\Card (\B)<2$ only provide trivial results, because
the only $k$-powerfree morphism from $\A^{*}$ to $\{\e\}^{*}$ is the
empty morphism $\e$, and for $\Card(\B)=1$ the only additional
morphism is the map for $\Card(\A)=1$ that maps the single element in
$\A$ to the single letter in $\B$. {}From now on, we consider
alphabets with $\Card (\B) \ge 2$. First, we deal with the case
$\Card(\A) \ge 3$.

\begin{theorem}[Richomme, Wlazinski \cite{Richomme:2002}]
 Given two alphabets $\A$ and $\B$ such that\/ $Card(\A) \ge 3$ and\/
 $\Card(\B) \ge 2$, and given any integer $k\ge 3$, there is no finite
 test-set for $k$-powerfree morphisms from $\A^{*}$ to $\B^{*}$.
\end{theorem}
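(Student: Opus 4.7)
The plan is to establish the contrapositive: for every finite set $T\subset\A^{*}$ of $k$-powerfree words, I will exhibit a morphism $\rho\!:\A^{*}\rightarrow\B^{*}$ that maps every word in $T$ to a $k$-powerfree word while failing to be $k$-powerfree on all of $\A^{*}$. Setting $N=\max\{|w|:w\in T\}$, it suffices to construct, for each $N\in\N$, a morphism $\rho_N$ with the following two properties:
\begin{itemize}
\i $\rho_N(w)\in F^{(k)}(\B)$ for every $w\in F^{(k)}(\A)$ with $|w|\le N$;
\ii $\rho_N$ is not $k$-powerfree, i.e.\ there is some (necessarily longer) $w^{*}\in F^{(k)}(\A)$ with $\rho_N(w^{*})\notin F^{(k)}(\B)$.
\end{itemize}
If such a $\rho_N$ exists, the test-set property would force $\rho_N$ to be $k$-powerfree (because $\rho_N(T)\subset F^{(k)}(\B)$ by (i)), contradicting (ii).

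First I would reduce to the essential case $\Card(\A)=3$ and $\Card(\B)=2$: fix three distinguished letters $a,b,c\in\A$ and two letters $0,1\in\B$, design $\rho_N$ carefully on $\{a,b,c\}^{*}$ as described below, and extend it to the remaining letters of $\A$ trivially, say by mapping each additional letter to the single letter $0$. Any $k$-power one might produce on the extended alphabet can be traced back to a $k$-power on words over $\{a,b,c\}$, so this reduction loses nothing. The core task is then to choose $\rho_N(a)$, $\rho_N(b)$, $\rho_N(c)\in\{0,1\}^{*}$ so that the concatenation of their images admits a $k$-power factor only after more than $N$ successive letters have been strung together. Following the spirit of Crochemore's construction for squarefree morphisms on alphabets of cardinality exceeding $3$ (the $k=2$ analogue established earlier in this section), the idea is a \emph{delayed overlap}: the three images share a long common affix structure that is a nascent $k$-power, whose completion requires a prescribed $k$-powerfree seed word $w^{*}$ of length strictly greater than $N$ as input. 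The length parameters of the images are tuned to $N$, so that only after consuming the full seed do the partial pieces of the period align into a genuine $k$-power.

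The main obstacle is the explicit design of $\rho_N(a)$, $\rho_N(b)$, $\rho_N(c)$ together with the simultaneous verification of the two conditions. Verifying (i) requires a careful case analysis ruling out $k$-power factors of every possible shape: those lying entirely inside a single image $\rho_N(x)$, those straddling a single image boundary $\rho_N(xy)$, and those spanning three or more consecutive images within the image of a short $k$-powerfree word. Verifying (ii) requires exhibiting an explicit seed $w^{*}\in F^{(k)}(\{a,b,c\})$ together with a pinpointed $k$-power factor inside $\rho_N(w^{*})$. Balancing the length parameters so that this latent $k$-power emerges precisely once the input exceeds length $N$, while simultaneously preventing any premature $k$-power from appearing in shorter images, is the technical crux where essentially all of the combinatorial work concentrates.
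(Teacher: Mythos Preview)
The paper does not actually prove this theorem; it merely quotes the result of Richomme and Wlazinski with a citation, so there is no proof in the paper to compare against. That said, your overall strategy --- for every $N$, construct a morphism $\rho_N$ that preserves $k$-powerfreeness on inputs of length at most $N$ but fails on some longer $k$-powerfree word --- is the right shape of argument, and it is the natural extension of Crochemore's proof for the squarefree analogue (Theorem~5 in the paper).

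There is, however, a genuine gap in your reduction step. Reducing $\B$ to a two-letter alphabet is harmless (the inclusion $\{0,1\}\hookrightarrow\B$ carries everything over), but your reduction of $\A$ to three letters by sending each additional letter to the single letter $0$ does \emph{not} work as claimed. Take any extra letter $d\in\A\setminus\{a,b,c\}$ and consider the $k$-powerfree word $d^{\,k-1}a$ of length $k\le N$. Its image is $0^{\,k-1}\rho_N(a)$, which contains the $k$-power $0^{\,k}$ whenever $\rho_N(a)$ begins with $0$; similar short $k$-powers arise at the interface of $\rho_N(x)\,0^{\,k-1}\,\rho_N(y)$ for $x,y\in\{a,b,c\}$. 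So the assertion that ``any $k$-power one might produce on the extended alphabet can be traced back to a $k$-power on words over $\{a,b,c\}$'' is false as stated: the trivial extension creates brand-new short $k$-powers that were not present in the three-letter picture, destroying property~(i). You would either have to build the images of the extra letters just as carefully as those of $a,b,c$ (in which case you are no longer reducing), or argue directly for arbitrary $\Card(\A)\ge 3$ from the start. Either way, the reduction as written does not go through, and the proposal otherwise stops short of the actual combinatorial construction, which --- as you correctly note --- is where all the work lies.
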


This again is a negative result, which shows that the general
situation is difficult to handle. In general, no finite set of words
suffices to verify the $k$-powerfreeness of a morphism. The situation
improves if we restrict ourselves to uniform morphisms, and look for
test-sets for this restricted class of morphisms only. Here, a
\emph{test-set for $k$-powerfreeness of uniform morphisms} on $\A^{*}$
is a set $T \subset \A ^{*}$ such that, for every uniform morphisms
$\rho $ on $\A^{*}$, $\rho $ is $k$-powerfree if and only if $\rho
(T)$ is $k$-powerfree.

The existence of finite test-sets of this type was recently
established by Richomme and Wlazinski \cite{Richomme:2007}. Let
$\Card(\A) \ge 2$ and $k \ge 3$ be an integer.  Define
\[
T^{(k)}(\A) := U^{(k)}(\A) \cup \bigl(F^{(k)}(\A) \cap V^{(k)}(\A) \bigr)
\]
where $U^{(k)}(\A)$ is the set of $k$-powerfree words over $\A$ of
length at most $k+1$, and $V^{(k)}(\A)$ is the set of words over $\A$
that can be written in the form $a_0 w_1 a_1 w_2\ldots a_{k-1} w_k
a_k$ with letters $a_0, a_1, \ldots , a_k\in\A$ and words $w_1,w_2
\ldots w_k\in\A^{*}$ which contain every letter of $\A$ at most once
and satisfy $\bigl| |w_i|-|w_j|\bigr| \le 1$.  Obviously, this set is
finite and comprises words with a maximum length of
\[
  \max \bigl\{|w| \;\big|\; w \in T^{(k)}(\A) \bigr\}
   \le k \bigl(\Card(\A|)+1\bigr) + 1.
\]

\begin{theorem}[Richomme, Wlazinski \cite{Richomme:2007}]\label{thm:testset}
Let\/ $\Card(\A) \ge 2$ and $k \ge 3$ be an integer.  The finite set\/
$T^{(k)}(\A)$ is a test-set for $k$-powerfreeness of uniform morphisms
on $\A^{*}$.
\end{theorem}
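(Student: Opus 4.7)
The ``only if'' direction is straightforward: by construction $T^{(k)}(\A)\subset F^{(k)}(\A)$, since $U^{(k)}(\A)$ consists of $k$-powerfree words of length at most $k+1$ and $F^{(k)}(\A)\cap V^{(k)}(\A)$ is trivially $k$-powerfree, so if $\rho$ is $k$-powerfree then $\rho(T^{(k)}(\A))\subset F^{(k)}(\B)$. The content of the theorem is the converse, which I would prove by contraposition: assuming $\rho$ is $\ell$-uniform and \emph{not} $k$-powerfree, I will produce $w\in T^{(k)}(\A)$ such that $\rho(w)$ contains a $k$-power.

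Choose a $k$-powerfree word $w=b_1 b_2\cdots b_n$ of minimal length such that $\rho(w)$ contains a $k$-power, and fix a factorisation $\rho(w)=\alpha u^k\beta$ with $u\ne\e$. Minimality forces $|\alpha|<\ell$ and $|\beta|<\ell$: otherwise the first block $\rho(b_1)$ (or last block $\rho(b_n)$) lies entirely inside $\alpha$ (resp.\ $\beta$) and can be stripped off, producing a strictly shorter counterexample. If $|w|\le k+1$, then $w\in U^{(k)}(\A)\subset T^{(k)}(\A)$ and we are done.

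Assume therefore $|w|\ge k+2$. Write $m:=|u|=q\ell+s$ with $0\le s<\ell$. For each $i\in\{0,1,\ldots,k\}$, the boundary $p_i:=|\alpha|+im$ between the $i$-th and $(i+1)$-th copies of $u$ lies in a unique block $\rho(b_{j_i})$ of $\rho(w)$; define $a_i:=b_{j_i}$ and, for $i=1,\ldots,k$, set $w_i:=b_{j_{i-1}+1}\cdots b_{j_i-1}$. Since $|\alpha|,|\beta|<\ell$ one has $j_0=1$ and $j_k=n$, so $w=a_0 w_1 a_1\cdots a_{k-1} w_k a_k$ is literally in the form required by $V^{(k)}(\A)$. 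The constant displacement $p_{i+1}-p_i=m$ together with $\ell$-uniformity forces $j_{i+1}-j_i\in\{q,q+1\}$, so $|w_i|\in\{q-1,q\}$ and the balance condition $\bigl||w_i|-|w_j|\bigr|\le 1$ holds automatically.

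What remains is the distinct-letter condition: no letter of $\A$ may occur twice in any single $w_i$. I would establish this by another minimality argument. If some letter $c$ occurred at two positions $j_{i-1}<r<r'<j_i$ inside $w_i$, I would excise the factor $b_{r+1}\cdots b_{r'}$, obtaining a shorter word $w':=b_1\cdots b_r b_{r'+1}\cdots b_n$ in which the matching letter $c=b_r=b_{r'}$ serves as the ``glue'' between two factors of $w$. One then has to verify (i) that $w'$ is still $k$-powerfree, since it is the overlap at $c$ of two factors of the $k$-powerfree word $w$, and (ii) that $\rho(w')$ still contains a $k$-power, coming from $u^k$ by removing a block of $(r'-r)\ell$ consecutive symbols that lie strictly inside one copy of $u$ and regrouping the result as $(u')^k$ with a shorter period $u'$. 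This would contradict the minimality of $|w|$ and force $|w_i|\le\Card(\A)$, whence $|w|\le k(\Card(\A)+1)+1$ and $w\in F^{(k)}(\A)\cap V^{(k)}(\A)\subset T^{(k)}(\A)$, completing the contrapositive. The principal obstacle is precisely this excision step: a single naive cut damages exactly one of the $k$ copies of $u$ and does not obviously leave an honest $k$-power behind, so one has to choose the pair of repeated letters (or, if necessary, perform coordinated cuts in several of the $w_i$'s at once) so that the removed lengths add up to exactly the right amount to reorganise the surviving image into a genuine $k$-power on a shorter period, rather than a $(k-1)$-power or a fragmented pattern.
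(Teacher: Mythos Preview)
The paper does not give its own proof of this statement; it is quoted from Richomme and Wlazinski \cite{Richomme:2007} without argument, so there is nothing in the present text to compare your attempt against.

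As for the attempt itself, the skeleton is correct and agrees with the opening moves of the original proof: take a minimal $k$-powerfree $w$ with $\rho(w)$ containing a $k$-power $u^k$, deduce $|\alpha|,|\beta|<\ell$, mark the letters $a_i$ whose $\rho$-blocks contain the boundaries between successive copies of $u$, and read off $|w_i|\in\{q-1,q\}$. The remaining task --- bounding $|w_i|\le\Card(\A)$ --- is where your argument stops being a proof and becomes a wish list, as you yourself recognise.

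Both obstacles you name are genuine. For (i), overlapping a prefix and a suffix of a $k$-powerfree word at a single common letter can perfectly well create a fresh $k$-power across the seam; nothing in your setup rules this out. For (ii), your own diagnosis is exactly right: excising $(r'-r)\ell$ letters from a single copy of $u$ leaves $u^{i-1}u'u^{k-i}$ with $|u'|<|u|$, which is not a $k$-power, and ``coordinated cuts'' would require the \emph{same} repeated letter at matching offsets in every $w_j$, which pigeonhole alone does not provide. So the excision strategy, as formulated, does not close the gap. The proof in \cite{Richomme:2007} handles the bound on $|w_i|$ by a different mechanism --- first reducing, via the words in $U^{(k)}(\A)$, to the case where $\rho$ is injective on $\A$, and then pulling the periodicity of $u^k$ back through the uniform block structure to constraints on the letters of $w$ that conflict with its $k$-powerfreeness once the $w_i$ are too long. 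You should consult that paper for the details.
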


Due to the upper bound on the maximum length of words in
$T^{(k)}(\A)$, the following corollary is immediate.

\begin{corollary}[Richomme, Wlazinski \cite{Richomme:2007}]\label{coro:rw}
A uniform morphism $\rho$ on $\A^{*}$ is $k$-powerfree for an integer power
$k \ge 3$ if and only if $\rho(w)$ is $k$-powerfree for all $k$-powerfree words
$w$ of length at most $k \bigl(\Card(\A)+1\bigr) + 1$.
\end{corollary}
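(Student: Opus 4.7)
The corollary follows immediately from Theorem~\ref{thm:testset} combined with the length bound already stated for elements of $T^{(k)}(\A)$, so my plan is simply to unpack what the theorem gives us in this setting; no new construction is needed.

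For the ``only if'' direction, I would just invoke the definition of a $k$-powerfree morphism: if $\rho$ is $k$-powerfree, then $\rho(w) \in F^{(k)}(\B)$ for every $w \in F^{(k)}(\A)$, and this includes in particular every $k$-powerfree word of length at most $k(\Card(\A)+1)+1$, establishing the condition trivially.

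For the ``if'' direction, I would argue as follows. By construction, $T^{(k)}(\A) = U^{(k)}(\A) \cup \bigl(F^{(k)}(\A) \cap V^{(k)}(\A)\bigr)$ is a subset of $F^{(k)}(\A)$, since $U^{(k)}(\A)$ consists by definition of $k$-powerfree words and the second summand is intersected with $F^{(k)}(\A)$. Moreover, the upper bound
\[
 \max \bigl\{ |w| \;\big|\; w \in T^{(k)}(\A) \bigr\} \le k\bigl(\Card(\A)+1\bigr)+1
\]
is precisely the one recorded just before Theorem~\ref{thm:testset}. Hence every $w \in T^{(k)}(\A)$ is a $k$-powerfree word of length at most $k(\Card(\A)+1)+1$. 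Under the hypothesis of the corollary, this means that $\rho(w)$ is $k$-powerfree for every $w \in T^{(k)}(\A)$, i.e.\ $\rho\bigl(T^{(k)}(\A)\bigr) \subset F^{(k)}(\B)$. Since $T^{(k)}(\A)$ is a test-set for $k$-powerfreeness of uniform morphisms on $\A^{*}$ by Theorem~\ref{thm:testset}, this is equivalent to $\rho$ being $k$-powerfree.

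There is really no obstacle here: the substance of the result lives entirely in Theorem~\ref{thm:testset}, and the only thing one has to check is that the explicit set $T^{(k)}(\A)$ is contained in the collection of $k$-powerfree words whose length is bounded by $k(\Card(\A)+1)+1$, which is immediate from its definition and the stated length estimate. Accordingly, the write-up should be just a few lines, citing Theorem~\ref{thm:testset} and the length bound.
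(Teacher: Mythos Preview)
Your argument is correct and matches the paper's approach exactly: the paper simply notes that the corollary is immediate from Theorem~\ref{thm:testset} together with the stated upper bound on the length of words in $T^{(k)}(\A)$. Your write-up merely makes explicit the two directions and the containment $T^{(k)}(\A) \subset F^{(k)}(\A)$, which is precisely what is meant by ``immediate''.
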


Although this result provides an explicit test-set for
$k$-powerfreeness, it is of limited practical use, simply because the
test-set becomes large very quickly. Already for $\Card(\A)=4$ and
$k=3$, the set $T^{(3)}(\A)$ has $26247020$ elements. For comparison,
the set of cubefree words in four letters of length $16$, as required
in Corollary~\ref{coro:rw}, has $1939267560$ elements, so is still
much larger.

Finally, let us quote the following result of Ker\"{a}nen
\cite{Keraenen:1985}, which characterises $k$-powerfree binary
morphisms and indicates that the test-set of Theorem~\ref{thm:testset}
is far from optimal.

\begin{theorem}[Ker{\"a}nen \cite{Keraenen:1985}]
Let $\rho\! :\{a,b\} \rightarrow \B^{*}$ be a uniform morphism with $\rho (a)
\neq \rho (b)$ and primitive words $\rho (a), \rho (b)$ and $\rho (ab)$. For every
$k$-powerfree word $w \in \{a,b\}^{*}$, $\rho (w)$ is $k$-powerfree if and
only if $\rho (v)$ is $k$-powerfree for every subword $v$ of $w$ with
\[
  |v| \le
  \begin{cases}
  4 & \text{for $3 \le k \le 6$}; \\
  \frac{2}{3}(k+1) & \text{for $k \ge 7$}.
  \end{cases}
\]
\end{theorem}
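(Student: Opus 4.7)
The ``only if'' direction is immediate: every factor of a $k$-powerfree word is itself $k$-powerfree, and $\rho(v)$ is a factor of $\rho(w)$ whenever $v$ is a subword of $w$. The content of the theorem therefore lies entirely in the ``if'' direction, which I would attack by contrapositive. Assume that $\rho(w)$ contains some $k$-power $U^{k}$, and aim to produce a subword $v$ of $w$ of length bounded by the stated threshold such that $\rho(v)$ already contains a $k$-power. By passing to the primitive root of $U$ we may assume $U$ itself is primitive.

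Let $v$ be a shortest factor of $w$ with the property that $\rho(v)$ contains an occurrence of $U^{k}$, and write $\ell:=|\rho(a)|=|\rho(b)|$ for the common length of the letter images. Minimality of $v$ together with uniformity of $\rho$ yields the sandwich
\[
  (|v|-2)\,\ell + 2 \;\le\; k\,|U| \;\le\; |v|\,\ell,
\]
so bounding $|v|$ is equivalent to bounding $|U|/\ell$ from above.

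The plan is to exploit the hypotheses $\rho(a)\neq\rho(b)$ and the primitivity of $\rho(a)$, $\rho(b)$, $\rho(ab)$ to synchronise the period $|U|$ of $U^{k}$ inside $\rho(w)$ with the block decomposition of $\rho(w)$ into letter images. Two complementary cases arise. If $|U|$ is a multiple of $\ell$, then $U=\rho(w')$ for some $w'\in\{a,b\}^{*}$, and a shift argument using primitivity of $\rho(ab)$ forces the occurrence of $U^{k}$ in $\rho(w)$ to be aligned with the block decomposition, so that $(w')^{k}$ appears as a factor of $w$ and $w$ fails to be $k$-powerfree. If $|U|$ is not a multiple of $\ell$, then $\rho(w)$ carries two incompatible periods along the range spanned by $U^{k}$; a Fine--Wilf type argument, together with the primitivity of $\rho(a)$ and $\rho(b)$, bounds $|U|$ by a small constant multiple of $\ell$, which via the displayed inequality bounds $|v|$.

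The main obstacle is making this synchronisation argument sharp enough to recover the explicit numerical thresholds $|v|\le 4$ uniformly for $3\le k\le 6$ and $|v|\le\tfrac{2}{3}(k+1)$ for $k\ge 7$. The boundary contribution from the two letters of $v$ on which $U^{k}$ may land only partially is of order $1$, and is therefore comparable to $k|U|/\ell$ when $k$ is small; in that regime one has to perform a careful position-by-position case analysis on where the occurrence of $U^{k}$ starts inside $\rho(v)$ and rule out very short primitive $U$ that give rise to a long repetition without already producing a $k$-power in the image of a bounded factor. I expect this case analysis, rather than the general framework above, to constitute the bulk of the actual work in \emph{Ker\"{a}nen}'s proof.
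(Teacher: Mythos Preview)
The paper does not prove this theorem; it is merely quoted as a result of Ker{\"a}nen with a citation to \cite{Keraenen:1985}, so there is no proof in the paper against which to compare your proposal. Your sketch outlines a plausible strategy (synchronisation via primitivity of $\rho(a)$, $\rho(b)$, $\rho(ab)$, a Fine--Wilf argument for the non-aligned case, and a residual case analysis for small $k$), and you are right that the sharp constants are the hard part; but if you want to verify whether this matches Ker{\"a}nen's actual argument you will have to consult the original 1985 paper rather than this survey.
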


\section{Entropy of powerfree words}

Let $\A$ be an alphabet. A subset $X \subset \A^{*}$ is called
\emph{factorial} if for any word $x \in X$ all factors of $x$ are also
contained in $X$. Define for a factorial subset $X \subset \A^{*}$ the
number of words of length $n$ occurring in $X$ by $c^{}_{X}(n)$. This
number gives some idea of the complexity of $X$: the larger the number
of words of length $n$, the more diverse or complicated is the
set. That is why $c^{}_{X}\!:\N \rightarrow \N$ is called the
\emph{complexity function} of $X$.

\begin{definition} \label{def:entropy}
The \emph{(combinatorial) entropy} of an infinite factorial set\/
$X \subset \A^{*}$ is defined by
\[
   h(X)=\lim_{n \rightarrow \infty}\frac{1}{n} \log c^{}_{X}(n).
\]
The requirement that $X$ is factorial ensures the existence of the
limit, see for example \cite[Lemma~1]{Baake:1997}.
\end{definition}

We note the following:
\begin{itemize}
\i If $X \subset \A^{*}$ with $\Card(\A)=r$, then  $1 \le c^{}_{X}(n) \le r^n$ for all $n$ which implies
$0 \le h(X) \le \log r$.
\ii If $X=\A^{*}$ with $\Card(\A)=r$, then $c^{}_{X}(n)=r^n$ and $h(X)= \log r$.
\end{itemize}
The set of $k$-powerfree words $F^{(k)}(\A)$ over an alphabet $\A$ is
obviously a factorial subset of $\A^{*}$, which is infinite for
suitable values of $k$ for a given alphabet $\A$. The precise value of
the corresponding entropy, which coincides with the topological
entropy \cite{Walters}, is not known, but lower and upper bounds exist
for many cases. Recently, much improved upper and lower bounds have
been established for $h\bigl(F^{(2)}(\{0,1,2\})\bigr)$ and
$h\bigl(F^{(3)}(\{0,1\})\bigr)$, which will be outlined
below. Generally, it is easier to find upper bounds than to give lower
bounds, due to the factorial nature of the set of $k$-powerfree words,
so we start with describing several methods to produce upper bounds on
the entropy.

\subsection{Upper bounds for the entropy}

A simple way to provide upper bounds is based on the enumeration of the
set of $k$-powerfree words up to some length. Clearly, for the case of
$r=\Card(\A)$ letters, the number of words
$c(n):=c^{}_{F^{(k)}(\A)}(n)$ is bounded by $r^n$, so the
corresponding entropy is $h:=h\bigl(F^{(k)}(\A)\bigr)\le \log r$, as
mentioned above.  Suppose we know the actual value of $c(n)$ for some
fixed $n$. Then, due to the factorial nature of the set
${F^{(k)}(\A)}$,
\[
    c(mn) \le c(n)^m
\]
for any $m\ge 1$. Hence
\begin{equation}\label{eq:enulim}
     h = \lim_{m\to\infty} \frac{\log c(mn)}{mn}
       \le \frac{\log c(n)}{n},
\end{equation}
which, for any $n$, yields an upper bound for $h$. Obviously, the
larger the value of $n$, the better the bound obtained in this way. In
some cases, the bound can be slightly improved by considering words
that overlap in a couple of letters; see \cite{Baake:1997} for an
example.

Sharper upper bounds can be produced by following a different
approach, namely by considering a set of words that do not contain
$k$-powers of a fixed finite set of words, for instance $k$-powers
of all words up to a given length. This limitation means that the
number of forbidden words is finite, and that the resulting factorial
set has a larger entropy than the set of $k$-powerfree words, so the
latter provides an upper bound. Again, by increasing the number of
forbidden words, the bounds can be systematically improved.

As Noonan and Zeilberger pointed out \cite{Noonan:1999}, it is
possible to calculate the generating function for the numbers of words
avoiding a finite set of forbidden words by solving a system of linear
equations. The generating functions are rational functions, and the
location of the pole closest to the origin determines the radius of
convergence, and hence the entropy of the corresponding set of
words. This approach has been applied in Ref.~\cite{Richard:2004} to
derive an upper bound for the set of squarefree words in three
letters, and generating functions for cubefree words in two letters
are discussed below.

A related, though computationally easier approach is based on a
Perron-Frobenius argument. It is sometimes referred to as the
`transfer matrix' or the `cluster' approach. Here, a matrix is
constructed, which determines how $k$-powerfree words of a given
length can be concatenated to form $k$-powerfree words, and the growth
rate is then determined by the maximum eigenvalue of this matrix. Both
methods yield upper bounds that can be improved by increasing the
length of the words involved, and in principle can approximate the
entropy arbitrarily well, though in practice this is limited by the
computational problem of computing the leading eigenvalue of a large
matrix, or solving a large system of linear equations; see, for
instance, \cite{Ochem:2006} for details.

\subsection{Lower bounds for the entropy}

Until very recently, all methods used to prove that the entropy of
$k$-powerfree words is positive and to establish lower bounds on the
entropy were based on $k$-powerfree morphisms. Clearly, a
$k$-powerfree morphism, iterated on a single letter, produces
$k$-powerfree words of increasing length and suffices to show the
existence of infinite $k$-powerfree words. For example, the fact that
the Thue-Morse morphism \eqref{eq:tm} is cubefree shows the existence
of cubefree words of arbitrary length in two letters. To prove that
the entropy is actually positive, one has to show that the number of
$k$-powerfree words grows exponentially with their length.
Essentially, this is achieved by considering $k$-powerfree morphisms
from a larger alphabet. The following theorem is a generalisation of
Brandenburg's method, compare \cite{Brandenburg:1983}, and provides a
path to produce lower bounds for the entropy of $k$-powerfree words.

\begin{theorem}\label{thm:gbt}
Let $\A$ and $\B$ be alphabets with\/ $\Card(\A)=r\Card(\B)$, where
$r>1$ is an integer. If there exists an\/ $\ell$-uniform\/ $k$-powerfree
morphism $\rho\!:\A^{*} \rightarrow \B^{*}$, then
\[
    h\bigl(F^{(k)}(\B)\bigr) \ge \frac{\log r}{\ell-1}.
\]
\end{theorem}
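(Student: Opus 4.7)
The plan is to squeeze the complexity function $c^{}_{F^{(k)}(\A)}(n)$ between two expressions involving $c^{}_{F^{(k)}(\B)}$: a lower bound obtained by ``inflating'' words over $\B$ to words over $\A$ via an $r$-to-one projection made available by the hypothesis $\Card(\A)=r\Card(\B)$, and an upper bound obtained from the injective morphism $\rho$. Chaining these will produce a self-recursion for $c^{}_{F^{(k)}(\B)}$ between the scales $n$ and $\ell n$, from which the entropy bound follows in the limit.

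For the lower bound, I would pick any surjection $\pi\!:\A \to \B$ whose fibres all have size $r$ (possible by the cardinality hypothesis) and extend it letterwise to a morphism $\pi\!:\A^{*} \to \B^{*}$. If $u\in\A^{*}$ contains a factor $v^{k}$ with $v\ne\e$, then $\pi(u)$ contains the factor $\pi(v)^{k}$, which is a genuine $k$-power because $\pi(v)\ne\e$; contrapositively, any $u$ with $\pi(u)\in F^{(k)}(\B)$ is itself in $F^{(k)}(\A)$, so each $k$-powerfree word in $\B^{n}$ has exactly $r^{n}$ $k$-powerfree preimages in $\A^{n}$, giving $c^{}_{F^{(k)}(\A)}(n) \ge r^{n}\, c^{}_{F^{(k)}(\B)}(n)$. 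For the upper bound, I would first show $\rho$ is injective on $\A^{*}$: if distinct letters $a\ne b$ satisfied $\rho(a)=\rho(b)$, then $a^{k-1}b$ is $k$-powerfree while $\rho(a^{k-1}b)=\rho(a)^{k}$ is a $k$-power, contradicting the $k$-powerfreeness of $\rho$. Since $\rho$ is $\ell$-uniform, the image of any word of length $n$ decomposes uniquely into $n$ consecutive blocks of length $\ell$, so letter-injectivity lifts to injectivity on $\A^{*}$. Combined with $\rho\bigl(F^{(k)}(\A)\bigr)\subset F^{(k)}(\B)$ and $|\rho(u)|=\ell|u|$, this yields $c^{}_{F^{(k)}(\A)}(n) \le c^{}_{F^{(k)}(\B)}(\ell n)$.

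Combining the two bounds gives $c^{}_{F^{(k)}(\B)}(\ell n) \ge r^{n}\, c^{}_{F^{(k)}(\B)}(n)$. Writing $h:=h\bigl(F^{(k)}(\B)\bigr)$, taking logarithms, dividing by $\ell n$, and letting $n\to\infty$, both $\log c^{}_{F^{(k)}(\B)}(\ell n)/(\ell n)$ and $\log c^{}_{F^{(k)}(\B)}(n)/n$ tend to $h$ (the limit exists by factoriality, as noted after Definition~\ref{def:entropy}), so the inequality becomes $h \ge (\log r)/\ell + h/\ell$, which rearranges to $h\ge(\log r)/(\ell-1)$. The main conceptual hurdle is spotting the right coupling: neither inequality alone uses the cardinality hypothesis $\Card(\A)=r\Card(\B)$ productively, and only the projection $\pi$ converts the size discrepancy between $\A$ and $\B$ into exponentially many additional $k$-powerfree words. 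The only other step that warrants care is the injectivity of $\rho$ on $\A^{*}$, which hinges on the $a^{k-1}b$ construction together with the $\ell$-uniform block decomposition.
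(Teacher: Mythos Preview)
Your proof is correct and follows essentially the same route as the paper's: both use the $r$-to-one projection $\pi$ (called $\phi$ in the paper) to lift each $k$-powerfree word over $\B$ to $r^{n}$ many $k$-powerfree words over $\A$, then push these forward via the injective morphism $\rho$ to obtain $c^{}_{F^{(k)}(\B)}(\ell n)\ge r^{n}\,c^{}_{F^{(k)}(\B)}(n)$, from which the bound follows by taking logarithms and passing to the limit. Your write-up is in fact a bit more explicit than the paper's in justifying the injectivity of $\rho$ via the $a^{k-1}b$ construction and the $\ell$-uniform block decomposition, but the overall structure is identical.
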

\begin{proof}
For this proof define $h:=h\bigl(F^{(k)}(\B)\bigr)$,
$c(n):=c_{F^{(k)}(\B)}(n)$ and $s:=\Card(\B)$. Label the elements of
$\A$ as $\{a_{11}, \ldots, a_{1r}, a_{21}, \ldots, a_{2r}, \ldots,
a_{s1},\ldots,a_{sr} \}$ and the elements of $\B$ as $\{b_1, \dots,
b_s\}$. Define the map $\phi\!:\A^{*} \rightarrow \B^{*}$ as
$\phi(a_{ij}):=b_i$ for $i=1, \ldots, s$ and $j=1, \ldots, r$. Hence
$\Card(\phi^{-1}(b_i))=r$. Every $k$-powerfree word of length $m$ over
$\B$ has $r^{m}$ different preimages of $\phi$ which, by construction,
consist only of $k$-powerfree words. These words are mapped by $\rho$,
which is injective due to its $k$-powerfreeness, to different
$k$-powerfree words of length $m\ell$ over $\B$. This implies the
inequality
\begin{equation}\label{eq:submul}
  c(m\ell) \ge r^{m}c(m)
\end{equation}
for any $m>0$. This means
\[
  \left( \frac{c(m \ell)}{c(m)}\right)^{\frac{1}{m}}\ge r,
\]
and hence
\[
\ell\,\frac{\log c(m\ell)}{m\ell} - \frac{\log c(m)}{m} \ge \log r
\]
for any $m>0$. Taking the limit as $m\to\infty$ gives
\[
   (\ell-1) h \ge \log r,
\]
thus establishing the lower bound.
\end{proof}

This result means that, whenever we can find a uniform $k$-powerfree
morphism from a sufficiently large alphabet, it provides a lower bound
for the entropy. Clearly, the larger $r$ and the smaller $\ell$ the
better the bound, so one is particularly interested in uniform
$k$-powerfree morphisms from large alphabets of minimal length.

Another method due to Brinkhuis \cite{Brinkhuis:1983}, which is
related to Brandenburg's method, can be generalised as follows.  Let again
$\B=\{b_1, \ldots, b_s\}$ be an alphabet and $r \in \N$. For $i=1,
\ldots s$ let
\[
  \U_i:=\{U_{i,1}, U_{i,2}, \ldots, U_{i,r}\}
\]
with $U_{i,j} \subset F_{\ell}^{(k)}(\B)$, where the latter denotes
the words in $F^{(k)}(\B)$ which have length $\ell$. The set
$\U=\{\U_1, \ldots \U_{n}\}$ is called an
\emph{$(k,\ell,r)$-Brinkhuis-set} if the $\ell$-uniform substitution
(in the context of formal language theory) $\phi$ from $\B^{*}$ to
itself defined by
\[
   \phi\!: b_i \mapsto  \U_i \; \text{ for } \; i=1, \ldots, s
\]
has the property $\phi(F^{(k)}(\B)) \subset F^{(k)}(\B)$. In other
words $\U$ is an $(k, \ell, r)$-Brinkhuis-set if the substitution of
every letter $b_i$, occurring in a $k$-powerfree word, by an element of
$\U_i$ results in a $k$-powerfree word over $\B$.  The existence of a
$(k, \ell, r)$-Brinkhuis-set delivers the lower bound
\[
   h(F^{(k)}(\B)) \ge \frac{\log r}{\ell-1}
\]
because every $k$-powerfree word of length $m$ is mapped to $r^{m}$
powerfree words of length $\ell m$; compare Eq.~\eqref{eq:submul}.

The method of Brinkhuis is stronger than the method of
Brandenburg. Not every $(k, \ell, r)$-Brinkhuis-set implies a map
according to Theorem \ref{thm:gbt}, see \cite[p.~287]{Berstel:2005}
for an example. Conversely, if there exists an $\ell$-uniform
$k$-powerfree morphism $\rho\!:\A^{*} \rightarrow \B^{*}$ according to
Theorem \ref{thm:gbt}, then there exists a $(k, \ell,
r)$-Brinkhuis-set, namely
$\U_{i}=\bigl\{\rho(a_{i1}),\ldots,\rho(a_{ir})\bigr\}$ for
$i=1,\ldots,s$, with the notation of Theorem~\ref{thm:gbt}.

Brinkhuis' method was applied in
Refs.~\cite{Elser:1983,Grimm:2001,Sun:2003}; see also below for a
summary of bounds obtained for binary cubefree and ternary squarefree
words. These bounds have in common that they are nowhere near the
actual value of the entropy, and while a systematic improvement is
possible by increasing the value of $r$ in Theorem~\ref{thm:gbt}
(which, however, also means that one has to consider larger values of
$\ell$), it will always result in a much smaller growth rate, because
only a subset of words is obtained in this way.

Recently, a different approach has been proposed \cite{Kolpakov:2007},
based essentially on the derivation of an inequality
\begin{equation}
    S_m(n+1) \geq \alpha S_m(n)
\end{equation}
for the weighted sum $S_m(n)$ of the number of elements in a certain
subset (which depends on the choice of $m\in\N$) of squarefree (resp.\
cubefree) words of length $n$ over a ternary (resp.\ binary) alphabet
and a parameter $\alpha >1$ which satisfies two inequalities for $i=m,
m+1, \ldots, n-1$. The estimation of $S_m(n+1)$ starts from a
Perron-Frobenius argument and concludes with the observation that the
order of growth of the number of squarefree (resp.\ cubefree) words
cannot be less than the order of growth of $S_m(n)$, which is
$\alpha$. This implies
\[
    h(F^{(k)}(\A))> \log(\alpha)
\]
for $k=2$ and $\A=\{0,1,2\}$ or $k=3$ and $\A=\{0,1\}$, with the
corresponding values for $\alpha$. In the end, this method leads to a
recipe to check, with a computer, several conditions for the
parameters (including $m$), which ensure that the inequality for $S_m$
holds. By increasing the parameter $m$, it appears to be possible to
estimate the growth rate of cubefree and squarefree words with an
arbitrary precision. For details, we refer the reader to
Ref.~\cite{Kolpakov:2007}.

\section{Bounds on the entropy of binary cubefree and ternary squarefree words}

We now consider the two main examples, binary cubefree and ternary
squarefree words, in more detail, reviewing the bounds derived by the
various approaches mentioned above. We start with the discussion of
binary cubefree words, and then give a brief summary of the analogous
results for ternary squarefree words.

\subsection{Binary cubefree words}

Define for this section $b(n):=c^{}_{F^{(3)}(\{0,1\})}(n)$ as the
number of binary cubefree words of length $n$ and
$h:=h\bigl(F^{(3)}(\{0,1\})\bigr)$ as the entropy of cubefree words
over the alphabet $\{0,1\}$. The values for $b(n)$ with $n\le 47$ are
given in \cite{Edlin:1999}; an extended list for $n\le 80$ is shown in
Table~\ref{tab:cfw}. They were obtained by a straight-forward
iterative construction of cubefree words, appending a single letter at
a time. According to Eq.~\eqref{eq:enulim}, the corresponding upper
limit for the entropy $h$ is
\[
    h \le \frac{\log b(80)}{80} \simeq 0.389855 .
\]
For comparison, the limit obtained using the number of words of length
$79$ is $0.390020$, which indicates that these limits are still
considerably larger than the actual value of $h$. As in the case of
ternary squarefree words \cite{Richard:2004}, the asymptotic behaviour
of $b(n)$ fits a simple form $b(n)\sim A x_{\text{c}}^{-n}$ as
$n\rightarrow\infty$, pointing at a simple pole as the dominating
singularity of the corresponding generating function at
$x=x_{\text{c}}$. The estimated values of the coefficients are
$A\simeq 2.847$ and $x_{\text{c}}\simeq 1.4575773$, leading to a
numerical estimate of $h=\log(x_{\text{c}})\simeq 0.3767757$ for the
entropy.

Let us compare this with the upper limit derived from generating
function of the number of binary length-$p$ cubefree words. To this
end, let $b_{p}(n):=c^{}_{F^{(3,p)}(\{0,1\})}(n)$ denote the number of
length-$p$ cubefree words, and define
\begin{equation}\label{eq:genfun}
   B_{p}(x)=\sum^{\infty}_{n=0}b_{p}(n)\, x^n
\end{equation}
to be the generating function for the number of binary length-$p$
cubefree words. These functions of $x$ are rational \cite{Noonan:1999}.
The first few generating functions read
\begin{eqnarray*}
   B^{}_{0}(x) \; = & \frac{1}{1-2x} & =
               \; 1 + 2x + 4x^2 + 8 x^3 + 16 x^4 + 32 x^5 + 64 x^6 + \ldots , \\
   B^{}_{1}(x) \; = & \frac{1+x+x^2}{1-x-x^2} & =
               \; 1 + 2x + 4x^2 + 6 x^3 + 10 x^4 + 16 x^5 + 26 x^6 + \ldots , \\
   B^{}_{2}(x) \; = & \frac{1+2x+3x^2+3x^3+3x^4+3x^5+2x^6}{1-x^2-x^3-x^4-x^5}
            & =\;  1 + 2x + 4x^2 + 6 x^3 + 10 x^4 + 16 x^5 + 24 x^6 + \ldots
\end{eqnarray*}
The degrees of the numerator and denominator polynomials for $p\le 14$
are given in Table~\ref{tab:genfun}. The generating functions
$B^{}_{p}(x)$ have a finite radius of convergence, determined by the
location of the zero $x^{}_{\mathrm{c}}$ of its denominator polynomial
which lies closest to the origin. A plot of the location of poles of
$B^{}_{14}(x)$ is shown in Figure~\ref{fig:poles}. It very much
resembles the analogous distribution for ternary squarefree words
\cite{Richard:2004}; again, the poles seem to accumulate, with
increasing $p$, on or near the unit circle, which may indicate the
presence of a natural boundary beyond which the generating function
for cubefree binary words (corresponding to taking $p\to\infty$)
cannot be analytically continued; see \cite{Richard:2004} for a
discussion of this phenomenon in the case of ternary squarefree
words.

\begin{table}
\caption{The number $b(n)$ of binary cube-free words of length $n$
for $n\le 80$.}\label{tab:cfw}
\begin{center}
\begin{tabular}{rrp{2ex}rrp{2ex}rrp{2ex}rr}
\hline
\multicolumn{1}{c}{$n$} &
\multicolumn{1}{c}{$b(n)$} & &
\multicolumn{1}{c}{$n$} &
\multicolumn{1}{c}{$b(n)$} & &
\multicolumn{1}{c}{$n$} &
\multicolumn{1}{c}{$b(n)$} & &
\multicolumn{1}{c}{$n$} &
\multicolumn{1}{c}{$b(n)$}\\
\hline
   $1$ &    $2$ && $21$ &    $7754$ && $41$ &    $14565048$ && $61$ &    $27286212876$\\
   $2$ &    $4$ && $22$ &   $11320$ && $42$ &    $21229606$ && $62$ &    $39771765144$\\
   $3$ &    $6$ && $23$ &   $16502$ && $43$ &    $30943516$ && $63$ &    $57970429078$\\
   $4$ &   $10$ && $24$ &   $24054$ && $44$ &    $45102942$ && $64$ &    $84496383550$\\
   $5$ &   $16$ && $25$ &   $35058$ && $45$ &    $65741224$ && $65$ &   $123160009324$\\
   $6$ &   $24$ && $26$ &   $51144$ && $46$ &    $95822908$ && $66$ &   $179515213688$\\
   $7$ &   $36$ && $27$ &   $74540$ && $47$ &   $139669094$ && $67$ &   $261657313212$\\
   $8$ &   $56$ && $28$ &  $108664$ && $48$ &   $203577756$ && $68$ &   $381385767316$\\
   $9$ &   $80$ && $29$ &  $158372$ && $49$ &   $296731624$ && $69$ &   $555899236430$\\
  $10$ &  $118$ && $30$ &  $230800$ && $50$ &   $432509818$ && $70$ &   $810266077890$\\
  $11$ &  $174$ && $31$ &  $336480$ && $51$ &   $630416412$ && $71$ &  $1181025420772$\\
  $12$ &  $254$ && $32$ &  $490458$ && $52$ &   $918879170$ && $72$ &  $1721435861086$\\
  $13$ &  $378$ && $33$ &  $714856$ && $53$ &  $1339338164$ && $73$ &  $2509125828902$\\
  $14$ &  $554$ && $34$ & $1041910$ && $54$ &  $1952190408$ && $74$ &  $3657244826158$\\
  $15$ &  $802$ && $35$ & $1518840$ && $55$ &  $2845468908$ && $75$ &  $5330716904964$\\
  $16$ & $1168$ && $36$ & $2213868$ && $56$ &  $4147490274$ && $76$ &  $7769931925578$\\
  $17$ & $1716$ && $37$ & $3226896$ && $57$ &  $6045283704$ && $77$ & $11325276352154$\\
  $18$ & $2502$ && $38$ & $4703372$ && $58$ &  $8811472958$ && $78$ & $16507465616784$\\
  $19$ & $3650$ && $39$ & $6855388$ && $59$ & $12843405058$ && $79$ & $24060906866922$\\
  $20$ & $5324$ && $40$ & $9992596$ && $60$ & $18720255398$ && $80$ & $35070631260904$\\
\hline
\end{tabular}
\end{center}
\end{table}

As a consequence of Pringsheim's theorem
\cite[Sec.~7.2]{Titchmarsh:1976}, there is a dominant singularity on
the positive real axis; we denote the position of the singularity by
$x^{}_{\mathrm{c}}$. For the cases we considered, this simple pole
appears to be the only dominant singularity. Since the radius of
convergence of the power series $B^{}_{p}(x)$ is given by
$\bigl(\limsup_{n\to\infty} \sqrt[n]{b^{}_{p}(n)}\,\bigr)^{-1}$, the
entropy $h_{p}$ of the set of binary length-$p$ cubefree words is
$h_{p}=-\log x_{\mathrm{c}}$.  Clearly, $h_{p}\ge h_{p'}$ for $p\le
p'$, and $h=\lim_{p\to\infty}h_{p}$, so for any finite $p$ the entropy
$h_{p}$ provides an upper bound of the entropy $h$ of binary cubefree
words. The values of the entropy $h_{p}$ for $p\le 14$ are given in
Table~\ref{tab:genfun}. As was observed for ternary squarefree words
\cite{Richard:2004}, the values appear to converge very quickly with
increasing $p$, but it is difficult to extract a reliable estimate of
the true value of the entropy without making assumptions on the
asymptotic behaviour.

Already in 1983, Brandenburg
\cite{Brandenburg:1983} showed that
\[
   2 \cdot 2^{\frac{n}{9}} \le b(n)
    \le 2 \cdot 1251^{\frac{n}{17}}
\]
which leads in our setting to $0.07701 \le h \le 0.41952$.  The
currently best upper bounds are due to Edlin \cite{Edlin:1999} and
Ochem and Reix \cite{Ochem:2006}.  Analysing length-$15$ cubefree
words up to a finite length, Edlin \cite{Edlin:1999} arrives at the
bound of $h \le 0.376777$ (which is what we would expect to find if we
extended Table~\ref{tab:genfun} to $n=15$, but this would require huge
computational effort to compute the corresponding generating function
completely), while using the transfer matrix (or cluster) approach
described above, Ochem and Reix obtained an upper bound on the growth
rate of $1.45758131$, which corresponds to the bound
\[
     h \le 0.3767784
\]
on the entropy.

\begin{table}
\caption{The entropy $h_{p}$ of binary length-$p$ cubefree words,
obtained from the radius of convergence of the generating functions
$B_{p}(x)$ of Eq.~\eqref{eq:genfun}. Here, $d_{\mathrm{num}}$ and
$d_{\mathrm{den}}$ denote the degree of the polynomial in the
numerator and denominator of $B_{p}(x)$,
respectively.}\label{tab:genfun}
\begin{center}
\begin{tabular}{rrrr}
\hline
\multicolumn{1}{c}{$p$} &
\multicolumn{1}{c}{$d_{\mathrm{num}}$} &
\multicolumn{1}{c}{$d_{\mathrm{den}}$} &
\multicolumn{1}{c}{$h_{p}$} \\
\hline
 0 &    0 &    1 & 0.693147\\
 1 &    2 &    2 & 0.481212\\
 2 &    6 &    5 & 0.427982\\
 3 &   21 &   13 & 0.394948\\
 4 &   29 &   17 & 0.385103\\
 5 &   43 &   25 & 0.380594\\
 6 &   85 &   57 & 0.378213\\
 7 &  127 &   99 & 0.377332\\
 8 &  165 &  127 & 0.377179\\
 9 &  300 &  254 & 0.376890\\
10 &  450 &  395 & 0.376835\\
11 &  569 &  513 & 0.376811\\
12 & 1098 & 1031 & 0.376790\\
13 & 1750 & 1656 & 0.376783\\
14 & 2627 & 2540 & 0.376779\\
\hline
\end{tabular}
\end{center}
\end{table}

\begin{figure}
\caption{Location of poles of the generating function $B^{}_{14}(x)$.}
\centerline{\includegraphics[width=0.6\textwidth]{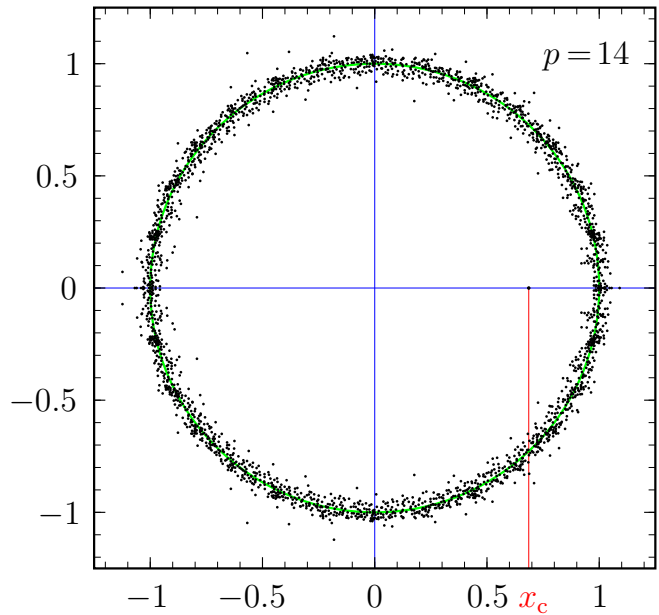}}
\label{fig:poles}
\end{figure}

We now move on to the lower bound and cubefree morphisms. We already
have seen one example above, the Thue-Morse morphism, which is a
cubefree morphism from a binary alphabet to a binary alphabet. As
explained above, it is also useful to find uniform cubefree morphisms
from larger alphabets, because these provide lower bounds on the
entropy. Clearly, if we have a uniform cubefree morphism $\rho\!:
\A^{*}\rightarrow \{0,1\}^{*}$ of length $\ell$, with $\Card(\A)=r$, it
is completely specified by the $r$ words $w^{}_{i}$, $i=1,\ldots,r$,
which are the images of the letters in $\A$. Since any permutation of
the letters in $\A$ will again yield a uniform cubefree morphism, the
set $\bigl\{w^{}_{1},\ldots,w^{}_{r}\bigr\}\subset\{0,1\}^{\ell}$ of
generating words determines the morphism up to permutation of the
letters in $\A$.

Moreover, the set
$\bigl\{\overline{w^{}_{1}},\ldots,\overline{w^{}_{r}}\bigr\}$, where
$\overline{w}$ denotes the image of $w$ under the permutation $0
\leftrightarrow 1$, also defines cubefree morphisms, as does
$\bigl\{\widetilde{w^{}_{1}},\ldots,\widetilde{w^{}_{r}}\bigr\}$,
where $\widetilde{w}$ denotes the reversal of $w$, i.e., the words $w$
read backwards. This is obvious because the test-sets of
Theorem~\ref{thm:testset} are invariant under these operations. Unless
the words are palindromic (which means that $\widetilde{w}=w$), the
set $\bigl\{w^{}_{1},\ldots,w^{}_{r}\bigr\}$ thus represents four
different morphisms (not taking into account permutation of letters in
$\A$), the forth obtained by performing both operations, yielding
$\bigl\{\widetilde{\overline{w^{}_{1}}},\ldots,\widetilde{\overline{w^{}_{r}}}\bigr\}$.

For cubefree morphisms from a three-letter alphabet $\A$ to two
letters one needs words of length at least six. For length six, there
are twelve in-equivalent (with respect to the permutation of letters in
$\A$) cubefree morphisms. The corresponding sets of generating words
are
\[
   \{w^{}_{1},w^{}_{2},w^{}_{4}\},\quad
   \{w^{}_{2},\overline{w^{}_{3}},\widetilde{\overline{w^{}_{3}}}\},\quad
   \{w^{}_{2},\overline{w^{}_{3}},w^{}_{4}\},
\]
and the corresponding
images under the two operations explained above. Here, the four words
are
\[
   w_{1} = 001011,\quad
   w_{2} = 001101,\quad
   w_{3} = 010110,\quad
   w_{4} = 011001.
\]
It turns out that none of these morphisms actually satisfy the sufficient criterion
of Theorem~\ref{thm:bean2}, but cubefreeness was verified using the test-set of
Theorem~\ref{thm:testset}.

One has to go to length nine to find cubefree morphisms from four to two letters.
There are $16$ in-equivalent morphisms with respect to permutations
of the four letters. Explicitly, they are given by the generating sets
\begin{equation}\label{eq:mor9}
 \{w_{1}, w_{2}, \widetilde{\overline{w_{2}}}, \widetilde{\overline{w_{3}}}\},\;
 \{w_{4}, \overline{w_{6}}, \overline{w_{7}}, \overline{w_{9}}\},\;
 \{w_{5}, \overline{w_{5}},  w_{8}, \overline{w_{8}}\},\;
 \{w_{5}, \overline{w_{5}}, \widetilde{w_{8}}, \widetilde{\overline{w_{8}}}\},\;
 \{\overline{w_{6}}, \widetilde{w_{7}},\widetilde{\overline{w_{8}}}, w_{9}\}
\end{equation}
with words
\begin{align*}
w_{1} & = 001001101,&
w_{2} & = 001010011,&
w_{3} & = 001011001,\\
w_{4} & = 001101001,&
w_{5} & = 010010110,&
w_{6} & = 010011010,\\
w_{7} & = 010100110,&
w_{8} & = 011001001,&
w_{9} & = 011010110.
\end{align*}
Note that $w^{}_{9}=\widetilde{w^{}_{9}}$ is a palindrome, and that
two of the five sets are invariant under the permutation
$0\leftrightarrow 1$, which explains why they only represent $16$
different morphisms.

Beyond four letters, the test-set of Theorem~\ref{thm:testset} becomes
unwieldy, but the sufficient criterion of Theorem~\ref{thm:bean2} can
be used to obtain morphisms. However, these may not have the optimal
length, as the examples here show -- again for length nine all
morphisms violate the conditions of Theorem~\ref{thm:bean2}. Still,
this need not be the case; for instance, morphisms from a five-letter
alphabet that satisfy the sufficient criterion exist for length $12$,
which in this case is the optimal length.

As a consequence of Theorem~\ref{thm:gbt}, the morphisms
\eqref{eq:mor9} from a four letter alphabet show that the entropy of
cubefree binary words is positive, and that
\[
  h \ge \frac{\log 2}{8} \simeq 0.08664.
\]
Using the sufficient condition, this bound can be improved. For instance,
for length $15$, one can find cubefree morphisms from $10$ letters,
which yields a lower bound of
\[
  h \ge \frac{\log 5}{14} \simeq 0.11496.
\]
However, a large step to close the gap between these lower bounds and
the upper bound was achieved by the work of Kolpakov \cite{Kolpakov:2007}.
With his approach, a lower bound of
\[
     h \ge 0.37676,
\]
which is the best lower bound so far, has been established. The
difference between this bound and the upper bound $0.3767784$ by Ochem
and Reix \cite{Ochem:2006} is just $10^{-5}$, showing the huge
improvement over the previously available estimates.

\subsection{Ternary squarefree words}

Denote by $a(n):=c^{}_{F^{(2)}(\{0,1,2\})}(n)$ the number of ternary
squarefree words and by $a_{p}(n)$ the number of length-$p$ squarefree
words of length $n$. For this section let
$h:=h\bigl(F^{(2)}(\{0,1,2\})\bigr)$ be the entropy of squarefree words
over the alphabet $\{0,1,2\}$.  See \cite{Baake:1997} for a list of
$a(n)$ for $n\leq 90$ and \cite{Grimm:2001} for $91 \leq n \leq
110$. The generating functions are defined according to the binary
cubefree case. The first four of them are stated in
\cite[Sec.~3]{Richard:2004}, which also contains a list of their radii
of convergence for $p\le 24$.  Already in 1983 Brandenburg
\cite{Brandenburg:1983} showed that
\[
   6 \cdot 2^{\frac{n}{22}} \le a(n) \le 6 \cdot 1172^{\frac{n}{22}}
\]
which leads in our setting to

\[
   0.03151 \le h \le 0.32120.
\]
In 1999, Noonan and Zeilberger \cite{Noonan:1999} lowered the upper
bound to $0.26391$ by means of generating functions for the number of
words avoiding squares of up to length $23$. Grimm and Richard
\cite{Richard:2004} used the same method to improve the upper bound to
$0.263855$. At the moment, the best known upper bound is $0.263740$
which was established by Ochem in 2006 using an approach based on the
transfer matrix (or cluster) method, see \cite{Ochem:2006} for details.

In 1998, Zeilberger showed that a Brinkhuis pair of length $18$
exists, which by Theorem~\ref{thm:gbt} implies that the entropy is
bounded by $h \ge \log(2)/17\simeq 0.04077$ \cite{Ekhad:1998}.  By
going to larger alphabets, this was subsequently improved to $h\ge
\log(65)/40\simeq 0.10436$ by Grimm \cite{Grimm:2001} and $h\ge
\log(110)/42\simeq 0.11192$ by Sun \cite{Sun:2003}. Again, the recent
work of Kolpakov \cite{Kolpakov:2007} has made a large difference to
the lower bounds; he achieved the best current lower bound which is $h
\ge 0.26369$. The difference between the best known upper and lower
bound is now just $5\times 10^{-5}$.

\section{Letter frequencies}

For a finite word $w$ of length $n$, the frequency of the letter $a$
is $\#_a(w)/n\in [0,1]$, where $\#_a(w)$ denotes the number of occurrences of
the letter $a$ in $w$.  In general, infinite $k$-powerfree words need
not have well-defined letter frequencies. However, we can define upper
and lower frequencies $f_a^{+}\ge f_a^{-}$ of a letter $a\in\A$ of a
word $w\in\A^{*}$ as
\[
   f_a^{+}:=\sup_{\{w_{n}\}}\limsup_{n\to\infty}\frac{\#_a(w_n)}{n},\qquad
   f_a^{-}:=\inf_{\{w_{n}\}}\liminf_{n\to\infty}\frac{\#_a(w_n)}{n},
\]
where $w_{n}$ is a $n$-letter subword of $w$.  Here, we take the
supremum and infimum over all sequences $\{w_{n}\}$. Alternatively, we
can compute these frequencies from $a_n^{+}=\max_{w_n\subset w}
\#_a(w_n)$ and $a_n^{-}=\min_{w_n\subset w} \#_a(w_n)$ by
$f_a^{\pm}=\lim_{n\to\infty}a_{n}^{\pm}/n$. The limits exist due to
the subadditivity of the sequences $\{a_n^{+}\}$ and
$\{1-a_{n}^{-}\}$.  If the infinite word $w$ is such that
$f_a^{+}=f_{a}^{-}=:f_{a}$, we call $f_{a}$ the \emph{frequency} of
the letter $a$ in $w$.

The requirement that a word is $k$-powerfree for some $k$ restricts
the possible letter frequencies. For instance, for cubefree binary
words, there cannot be three consecutive zeros, and hence the
frequency of the letter $0$ is certainly bounded from above by $2/3$.
Due to symmetry under permutation of letters, it is bounded from below
by $1/3$. In a similar way, considering maximum and minimum
frequencies of letters in \emph{finite} $k$-powerfree words produces
bounds on the possible (upper and lower) frequencies of letters in
infinite words. It is of interest, for which frequency of a letter
$k$-powerfree words cease to exist, and how the entropy of
$k$-powerfree words depends on the letter frequency. To answer these
questions, $k$-powerfree morphisms are exploited once again, and in
two ways. Firstly, the argument using frequencies in finite words only
produces `negative' results, in the sense that you can exclude the
existence of $k$-powerfree words for certain ranges of the
frequency. To show that $k$-powerfree words of a certain frequency
actually exist, these are produced as fixed points of $k$-powerfree
morphisms. The letter frequency for an infinite word obtained as a
fixed point of a morphism $\rho$ on the alphabet
$\A=\{a_1,a_2,\ldots,a_{m}\}$ is well-defined, and obtained from the
(statistically normalised) right Perron-Frobenius eigenvector of the
associated $m\times m$ substitution matrix $M$ with elements
$M_{ij}=\#_{a_i}\rho(a_j)$; see for instance \cite{BGJ}. For example,
for the Thue-Morse morphism \eqref{eq:tm}, the substitution matrix is
$M = \left(\begin{smallmatrix} 1 & 1 \\ 1 & 1
\end{smallmatrix}\right)$ with Perron-Frobenius eigenvalue $2$ and
corresponding eigenvector $(\frac{1}{2},\frac{1}{2})^{T}$, so both
letters occur with frequency $1/2$ in the infinite Thue-Morse word.

To show that there exist exponentially many words with a given letter
frequency, or, in other words, that the entropy of the set of
$k$-powerfree words with a given letter frequency is positive, a variant
of Theorem~\ref{thm:gbt} is used.

\begin{theorem}\label{thm:gbtf}
Let $\A=\{a_{11},\ldots,a_{1r},a_{21},\ldots,a_{2r},\ldots,
a_{s1},\ldots,a_{sr}\}$ and $\B=\{b_{1},\ldots,b_{s}\}$ be
alphabets with\/ $\Card(\A)=rs$ and\/ $\Card(\B)=s$, where $r,s>1$ are
integers. Assume that there exists an\/ $\ell$-uniform\/ $k$-powerfree morphism
$\rho\!:\A^{*} \rightarrow \B^{*}$ with
\[
   \#_{b}\rho(a_{ij})= \#_{b}\rho(a_{ij'})
\]
for all\/ $b\in\B$, $1\le i\le s$ and\/ $1\le j,j'\le r$. Define the
$r\times r$ matrix $M$ with elements
\[
   M_{ij} =  \#_{b_{i}}\rho(a_{j1}),
\]
and denote its right Perron-Frobenius eigenvector (with eigenvalue
$\ell$) by\/ $(f_{1},\ldots,f_{r})^{T}$, with statistical normalisation\/
$f_{1}+\ldots+f_{r}=1$. Then, the entropy\/ $h$ of the set of\/
$k$-powerfree words in $\B$ with prescribed letter frequencies $f_{i}$
of\/ $b_{i}$, $1\le i\le r$, is bounded by
\[
    h \ge \frac{\log r}{\ell-1}.
\]
\end{theorem}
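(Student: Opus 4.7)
The plan is to iterate the ``collapse'' construction used in the proof of Theorem~\ref{thm:gbt} and track letter frequencies via the matrix $M$. Let $\phi\!:\A^{*}\to\B^{*}$ be defined by $\phi(a_{ij})=b_i$. The critical new hypothesis $\#_{b}\rho(a_{ij})=\#_{b}\rho(a_{ij'})$ implies that the composition vector $\vec{c}(w):=(\#_{b_1}w,\ldots,\#_{b_s}w)^{T}\in\Z^{s}$ satisfies
\[
  \vec{c}\bigl(\rho(u)\bigr)=M\,\vec{c}\bigl(\phi(u)\bigr)
  \qquad\text{for every } u\in\A^{*}.
\]
Pick a starting word $v_0\in F^{(k)}(\B)$ of length $m_0$, and grow a tree whose nodes at level $n$ are $k$-powerfree words in $\B^{*}$ of length $\ell^{n}m_0$: from a node $v$ I generate $r^{|v|}$ children $\rho(u)$, one for each preimage $u\in\phi^{-1}(v)$. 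As in the proof of Theorem~\ref{thm:gbt}, these preimages are automatically $k$-powerfree (a $k$-power in $u$ would push down to one in $\phi(u)=v$), and their images $\rho(u)$ are $k$-powerfree of length $\ell|v|$. Because $\rho$ is $\ell$-uniform and, by $k$-powerfreeness, injective on letters, $\rho$ is injective on $\A^{*}$; hence two distinct branches of the tree cannot re-merge (if $v_n$ were shared they would force $u_{n-1}$, hence $v_{n-1}$, to coincide all the way back to the root). The number of distinct level-$n$ words is therefore at least
\[
  r^{m_0+\ell m_0+\dots+\ell^{n-1}m_0}=r^{m_0(\ell^{n}-1)/(\ell-1)}.
\]

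All these descendants share the composition vector $M^{n}\vec{c}(v_0)$. Since each column of $M$ sums to $|\rho(a_{j,1})|=\ell$, the Perron--Frobenius eigenvalue of $M$ is $\ell$; under primitivity of $M$ (which holds for the morphisms actually used) one has $\ell^{-n}M^{n}\vec{c}(v_0)/m_0\to\vec{f}$. Hence for any fixed $\epsilon>0$ every constructed word of sufficiently high level has letter frequencies within $\epsilon$ of $\vec{f}$. Writing $c_{\epsilon}(N)$ for the number of $k$-powerfree words in $\B^{*}$ of length $N$ whose frequencies lie within $\epsilon$ of $\vec{f}$, the previous count gives
\[
  \frac{\log c_{\epsilon}(\ell^{n}m_0)}{\ell^{n}m_0}\ge\frac{(1-\ell^{-n})\log r}{\ell-1}\longrightarrow\frac{\log r}{\ell-1}\quad(n\to\infty),
\]
and letting $\epsilon\to 0$ yields the stated lower bound on the entropy at prescribed frequencies $\vec{f}$.

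The main obstacle is more definitional than technical. The construction only realises compositions $M^{n}\vec{c}(v_0)$, which approach but never exactly equal $\ell^{n}m_0\vec{f}$; so the clean formulation of ``entropy at prescribed frequencies $\vec{f}$'' must either use a shrinking $\epsilon$-window around $\vec{f}$ or pass to infinite words and consider the subshift generated by the iteration. A secondary point is the implicit primitivity assumption on $M$, which is needed so that Perron--Frobenius gives pointwise (rather than merely Ces\`aro) convergence of the normalised iterates; this is easy to verify case by case for the concrete morphisms used in the binary cubefree and ternary squarefree applications, but ought to be made explicit in a fully rigorous statement.
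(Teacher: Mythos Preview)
Your proof follows essentially the same route as the paper's: the collapse map $\phi$, the counting inequality inherited from Theorem~\ref{thm:gbt}, and the observation that the hypothesis $\#_{b}\rho(a_{ij})=\#_{b}\rho(a_{ij'})$ forces all words produced by the construction to share composition statistics governed by $M$, so that the Perron--Frobenius eigenvector gives the limiting letter frequencies. Your version is in fact more explicit than the paper's about the iteration and the resulting count, and the caveats you raise (the need for an $\epsilon$-window or a passage to infinite words, and the implicit primitivity of $M$) are genuine subtleties that the paper's very brief proof simply glosses over.
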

\begin{proof}
The bound is the same as in Theorem~\ref{thm:gbt}, and the statement
thus follows by showing that the infinite words obtained from the
uniform $k$-powerfree morphism $\rho$ have letter frequency given by
$f_{1},\ldots,f_{r}$.

We again introduce the morphism $\phi\!:\A^{*} \rightarrow \B^{*}$ by
$\phi(a_{ij}):=b_i$ for $i=1, \ldots, s$ and $j=1, \ldots, r$. Every
$k$-powerfree word of length $m$ over $\B$ has $r^{m}$ different
preimages of $\phi$ which, by construction, consist only of
$k$-powerfree words. These words are mapped by $\rho$, which is
injective due to its $k$-powerfreeness, to different $k$-powerfree
words of length $m\ell$ over $\B$. Due to the condition
$\#_{b}\rho(a_{ij})= \#_{b}\rho(a_{ij'})$ on $\rho$, the letter
statistics do not depend on the choice of the preimage under $\phi$.
The letter frequencies of words obtained by the procedure described in
the proof of Theorem~\ref{thm:gbt} are thus well defined, and given by
the right Perron-Frobenius eigenvector of the $r\times r$ matrix $M$.
\end{proof}

Some results for binary cubefree words, as well as a discussion of the
empirical frequency distribution of cubefree binary words obtained
from the enumeration up to length $80$, are detailed below.

\subsection{Binary cubefree words}

When counting the numbers $b(n)$ of binary cubefree words of
length $n$ shown in Table~\ref{tab:cfw}, we also counted the number
$b(n,n^{}_{0})$ of words with $n^{}_{0}$ occurrences of the letter $0$.
Clearly, these numbers satisfy
\[
    b(n) = \sum_{n^{}_{0}=0}^{n} b(n,n^{}_{0})
\]
and $b(n,n-n^{}_{0})=b(n,n^{}_{0})$ as a consequence of the symmetry
under permutation of letters.
Their values for $n=80$ are given in
Table~\ref{tab:e80}.

\begin{table}
\caption{The number of the binary cube-free words of length $80$
with given excess $e=n^{}_{0}-40$ of the letter $0$.}
\label{tab:e80}
\begin{center}
\begin{tabular}{rr}
\hline
\multicolumn{1}{c}{$|e|$} &
\multicolumn{1}{c}{$b(80,40+e)$} \\
\hline
$0\quad$ & $9502419002570$ \\
$1\quad$ & $7575510051076$ \\
$2\quad$ & $3805516412947$ \\
$3\quad$ & $1172047753336$ \\
$4\quad$ & $210113470848$ \\
$5\quad$ & $20038955440$ \\
$6\quad$ & $866998237$ \\
$7\quad$ & $12460464$ \\
$8\quad$ & $26819$ \\
$\ge 9\quad$ & $ 0 $ \\
\hline
\end{tabular}
\end{center}
\end{table}

Obviously, there are at least $32$ and at most $48$ occurrences of the
letter $0$ in any cubefree binary word of length $80$, so the
frequency of a letter is bounded by $2/5\le f_{0}\le 3/5$.  A stronger
bound has been obtained by Ochem \cite{Ochem:2007a}, who showed
(amongst many results for a number of rational powers) that
$f_{0}>\frac{115}{283}\simeq 0.40636$, using a backtracking algorithm.

One is interested to locate the minimum frequency $f_{\text{min}}$,
such that infinite cubefree words with frequency
$f_{0}=f_{\text{min}}$ exist, but not for any
$f_{0}<f_{\text{min}}$. Clearly, the lower bound above is a lower
bound for $f_{\text{min}}$. In order to obtain an upper bound, we need
to prove the existence of an infinite binary cubefree words of a given
letter frequency.  This is again done by using a cubefree morphism,
which provides an infinite word with well-defined letter
frequencies. For instance,
\begin{align*}
    0&\mapsto  011011010110110011011010110 \\
    1&\mapsto  011011010110110011010110110
\end{align*}
is a uniform morphism of length $27$ with substitution matrix
$\left(\begin{smallmatrix}11 & 11 \\ 16 & 16\end{smallmatrix}\right)$,
so the infinite fixed point word has letter frequencies
$f_{0}=\frac{11}{27}$ and $f_{1}=\frac{16}{27}$.  Hence we deduce that
\[
  0.406360\simeq \frac{115}{283} < f_{\text{min}} \le
  \frac{11}{27} \simeq  0.407407.
\]

Using the data from our enumeration of binary cubefree words up to
length $80$, we can study the empirical distribution for small length,
and try to conjecture the behaviour for large
words. Figure~\ref{fig:dist} shows a plot of the normalised data
$b(80,40+e)/b(80)$ of Table~\ref{tab:e80}, compared with a Gaussian
distribution, which appears to fit the data very well. Here, the
Gaussian profile was determined from the variance $\sigma^{2}$
of the data points,
which is approximately $\sigma^{2}\simeq 2.124$.

To draw any conclusions on the limit of large word length, we need to
consider the scaling of the distribution with the word length $n$. The
first step is to determine how the variance scales with $n$. A plot of
the numerical data is given in Figure~\ref{fig:var}, which shows that,
for large $n$, the variance appears to scale linearly with $n$.  A
least squares fit to the data points for $40\le n\le 80$ gives a slope
of $0.021616$.

\begin{figure}[t]
\caption{Distribution of cubefree words of length $80$ as a function
of the excess $e=n_{0}-40$, compared to a Gaussian distribution with
the same variance $\sigma^{2}$.}
\centerline{\includegraphics[width=0.6\textwidth]{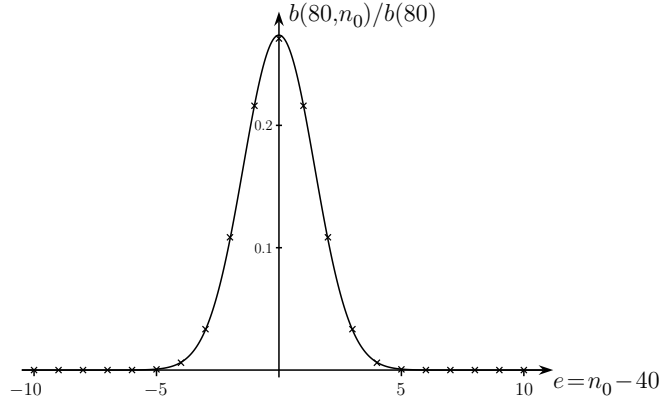}}
\label{fig:dist}
\end{figure}

\begin{figure}[t]
\caption{Variance of the distribution of the letter frequency in
binary cubefree words of length $n$.}
\centerline{\includegraphics[width=0.5\textwidth]{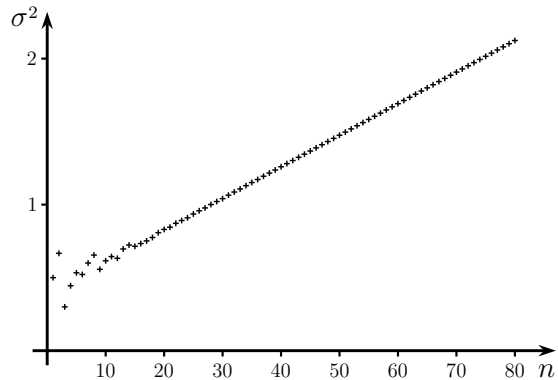}}
\label{fig:var}
\end{figure}

Assuming that the distribution for fixed $n$ is Gaussian, the suitably
re-scaled data
\[
    g_{n}(x) = \sqrt{n} \: \frac{b(n,\tfrac{n}{2}+e)}{b(n)},
\]
considered as a function of the rescaled letter excess
\[
    x = \frac{e}{\sqrt{n}},
\]
should approach a Gaussian distribution
\[
    G(x) = \frac{1}{\sqrt{2\pi\sigma^{2}}}
     \exp\Bigl(-\frac{x^2}{2\sigma^{2}}\Bigr)
\]
with variance $\sigma^{2}\simeq 0.021616$. Figure~\ref{fig:scal} shows
a plot of this distribution, together with the data points obtained
for $40\le n\le 80$. Clearly, there are some deviations, which has to
be expected due to the fact that the relationship between the variance
and the length shown in Figure~\ref{fig:var}, while being
asymptotically linear, is not a proportionality; however, the overall
agreement is reasonable. A plausible conjecture, therefore, is that
the scaled distribution becomes Gaussian in the limit of large word
length. In terms of the entropy, the observed concentration property
is consistent with the entropy maximum occurring at letter frequency
$1/2$, and a lower entropy for other letter frequencies. This is
similar to the observed and conjectured behaviour for ternary
squarefree words in Ref.~\cite{Richard:2004}.

\begin{figure}
\caption{The scaled data $g_{n}(x)$ for lengths $40\le n\le 80$,
compared to a Gaussian distribution $G(x)$.}
\centerline{\includegraphics[width=0.6\textwidth]{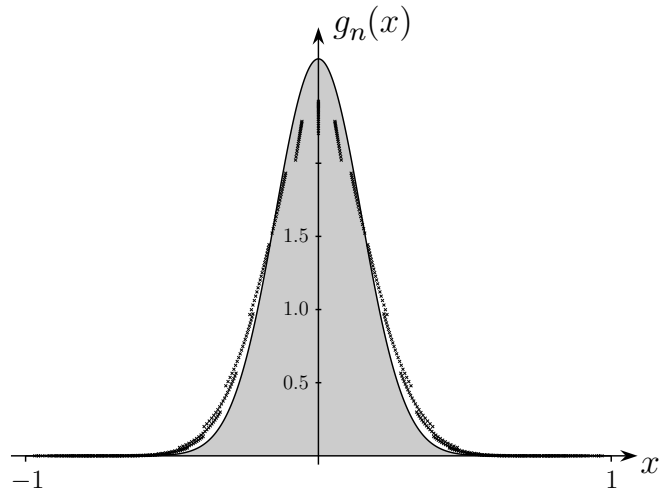}}
\label{fig:scal}
\end{figure}

By an application of Theorem~\ref{thm:gbtf}, the cubefree morphisms of
Eq.~\eqref{eq:mor9} show that the entropy for the case of letter
frequency $f_{0}=f_{1}=1/2$ is positive. More interesting in the case
of non-equal letter frequencies. As an example, consider the
$13$-uniform morphism
\begin{align*}
     a_{11}&\mapsto 0010010110011\\
     a_{12}&\mapsto 0010011010011\\
     a_{21}&\mapsto 0010110010011\\
     a_{22}&\mapsto 0100101001011,
\end{align*}
where all words on the right-hand side comprise seven letters $0$ and
six letters $1$. One can check that this morphism satisfies the
criterion of Theorem~\ref{thm:testset}, hence is cubefree.
Consequently, the matrix $M$ of Theorem~\ref{thm:gbtf} is
$M=\left(\begin{smallmatrix}7&7\\ 6&6 \end{smallmatrix}\right)$, and
the letter frequencies of any word constructed by this morphism are
$f_{0}=7/13$ and $f_{1}=6/13$. Hence, the set of binary cubefree words
with letter frequencies $f_{0}=7/13$ and $f_{1}=6/13$ has positive
entropy bounded by $h\ge \frac{1}{6} \log 2\simeq 0.115525$ (and, by
symmetry, this also holds for $f_{0}=6/13$ and $f_{1}=7/13$). Again,
like in the case of ternary squarefree words discussed in
Ref.~\cite{Richard:2004}, it is plausible to conjecture that the
entropy is positive on an entire interval of letter frequencies around
$1/2$ (where it is maximal), presumably on
$(f_{\text{min}},1-f_{\text{min}})$.

\subsection{Ternary squarefree words}

Letter frequencies in ternary squarefree words were first studied by
Tarannikov \cite{Tarannikov:2002}. He showed that the minimal letter
frequency $f_{\text{min}}$ is bounded by
\[
   0.274649 \simeq \frac{1780}{6481} \le f_{\text{min}}
   \le \frac{64}{233} \simeq 0.274678,
\]
see \cite[Thm.~4.2]{Tarannikov:2002}. These bounds have recently been
improved by Ochem \cite{Ochem:2007a} to
\[
   0.2746498 \simeq \frac{1000}{3641} \le f_{\text{min}}
   \le \frac{883}{3215} \simeq  0.2746501,
\]
who also showed that the maximum frequency $f_{\text{max}}$ of a
letter in a ternary squarefree word is bounded by
\[
    f_{\text{max}} \le \frac{255}{653} \simeq 0.390505;
\]
see \cite[Thm.~1]{Ochem:2007a}. Very recently, Khalyavin
\cite{Khalyavin:2007} proved that the minimum frequency is indeed equal
to Ochem's upper bound, so
\[
     f_{\text{min}}  =  \frac{883}{3215},
\]
which finally settles this question.

By constructing suitable squarefree morphisms in accordance with
Theorem~\ref{thm:gbtf}, Richard and Grimm \cite{Richard:2004} showed
that, for a number of letter frequencies, the number of ternary
squarefree words grows exponentially. This has recently been further
investigated by Ochem \cite{Ochem:2007b}.

\section{Summary and Outlook}

In this paper, we reviewed recent progress on the combinatorics of
$k$-powerfree words, with particular emphasis on the examples of
binary cubefree and ternary squarefree words, which have attracted
most attention over the years. Recent work in this area, using
extensive computer searches, but also new methods, has led to a
drastic improvement of the known bounds for the entropy of these
sets. No analytic expression for the entropy is known to date, and the
results on the generating function for the sets of length-$p$
powerfree words indicate that this may be out of reach. However,
considerable progress has been made on other combinatorial questions,
such as letter frequencies, where again bounds have been improved, but
eventually also a definite answer has emerged, in this case on the
minimum letter frequency in ternary squarefree words.

We also presented some new results on binary cubefree words, including
an enumeration of the number of words and their letter frequencies for
length up to $80$. The empirical distribution of the number of words
as a function of the excess of one letter is investigated, and
conjectured to become Gaussian in the limit of infinite word lengths
after suitable scaling. We also found bounds on the letter frequency
in binary squarefree words, and show that exponentially many words
with unequal letter frequency exist, like in the case of ternary
squarefree words.  The analysis of the generating functions of
length-$p$ binary cubefree words, which we calculated for $p\le 14$,
also shows striking similarity to the case of ternary squarefree
words, suggesting that the observed behaviour may be generic for sets
of $k$-powerfree words.

While a lot of progress has been made, there remain many open
questions. For instance, is there an explanation for the observed
accumulation of poles and zeros of the generating functions on or near
the unit circle, and is it possible to prove what happens in the limit
when $p\to\infty$? How does the entropy depend on the power, say for
binary $k$-powerfree words? A partial answer to this question is given
in Ref.~\cite{Karhu:2004}, but it would be nice to show that, at least
in some region, the entropy increases by a finite amount at any
rational value of $k$, which you might expect to happen. Concerning
powerfree words with given letter frequencies, how does the entropy
vary as a function of the frequency? One might conjecture that the
entropy changes continuously, but at present all we have are results
that for some very specific frequencies, where powerfree morphisms have
been found, the entropy is positive. Some of these questions may be too
hard to hope for an answer in full generality, but the recent progress
in the area shows that one should keep looking for alternative
approaches which may succeed.

\section*{Acknowledgements}

This research has been supported by EPSRC grant EP/D058465/1. The
authors thank Christoph Richard for useful comments and discussions.
UG would like to thank the University of Tasmania (Hobart) and the
AMSI (Melbourne) for their kind hospitality, and for the opportunity to
present his research as part of the AMSI/MASCOS Theme Program
\textit{Concepts of Entropy and Their Applications}.

\bibliographystyle{unsrt}

\end{document}